\shorttitle}
\@nx\MakeUppercase{\the\toks@}}
\patchcmd\@settitle{\uppercasenonmath\@title}{\Large}{}{}
\authors}
\newtheorem{theorem}{Theorem}[section]
\newtheorem{lemma}{Lemma}[section]
\newtheorem{remark}{Remark}[section]
\numberwithin{equation}{section}
\begin{document}
\address{$^{[1]}$ University of Sfax, Sfax, Tunisia.}
\email{\url{kais.feki@hotmail.com}}
\subjclass[2010]{46C05, 47A12,47A63,47A30}

\keywords{Positive operator, semi-inner product, operator matrix, spectral radius, numerical radius.}

\date{\today}
\author[Kais Feki] {\Large{Kais Feki}$^{1}$}
\title[Some $\mathbb{A}$-numerical radius inequalities for $d\times d$ operator matrices]{Some $\mathbb{A}$-numerical radius inequalities for $d\times d$ operator matrices}

\maketitle
\begin{abstract}
Let $A$ be a positive (semidefinite) bounded linear operator acting on a complex Hilbert space $\big(\mathcal{H}, \langle \cdot\mid \cdot\rangle \big)$. The semi-inner product ${\langle x\mid y\rangle}_A := \langle Ax\mid y\rangle$, $x, y\in\mathcal{H}$
induces a seminorm ${\|\cdot\|}_A$ on $\mathcal{H}$. Let $T$ be an $A$-bounded operator on $\mathcal{H}$, the $A$-numerical radius of $T$ is given by
\begin{align*}
\omega_A(T) = \sup\Big\{\big|{\langle Tx\mid x\rangle}_A\big|: \,\,x\in \mathcal{H}, \,{\|x\|}_A = 1\Big\}.
\end{align*}
In this paper, we establish several inequalities for $\omega_\mathbb{A}(\mathbb{T})$, where $\mathbb{T}=(T_{ij})$ is a $d\times d$ operator matrix with $T_{ij}$ are $A$-bounded operators and $\mathbb{A}$ is the diagonal operator matrix whose each diagonal entry is $A$. 
\end{abstract}

\section{Introduction and Preliminaries}\label{s1}
Let $\left(\mathcal{H}, \left\langle \cdot\mid \cdot\right\rangle \right)$ be a non-trivial complex Hilbert space, and let $\mathcal{B}(\mathcal{H})$
denote the $C^*$-algebra of all bounded linear operators on $\mathcal{H}$ with identity $I_{\mathcal{H}}$ (or $I$ if no confusion arises). If $\mathcal{H}=\mathbb{C}^d$, we identify $\mathcal{B}(\mathbb{C}^d)$ with the matrix algebra $\mathbb{M}_d(\mathbb{C})$ of $d\times d$ complex matrices. Let $\mathcal{B}(\mathcal{H})^+$ be the cone of positive (semi-definite) operators, i.e., $\mathcal{B}(\mathcal{H})^+=\left\{A\in \mathcal{B}(\mathcal{H})\,:\,\langle Ax\mid x\rangle\geq 0,\;\forall\;x\in \mathcal{H}\;\right\}$. Every $A\in \mathcal{B}(\mathcal{H})^+$ defines the following positive semi-definite sesquilinear form:
$$\langle\cdot\mid\cdot\rangle_{A}:\mathcal{H}\times \mathcal{H}\longrightarrow\mathbb{C},\;(x,y)\longmapsto\langle x\mid y\rangle_{A} =\langle Ax\mid y\rangle.$$
Clearly, the induced semi-norm is given by $\|x\|_A=\langle x\mid x\rangle_A^{1/2}$, for every $x\in \mathcal{H}$. This makes $\mathcal{H}$ into a semi-Hilbertian space. One can verify that $\|\cdot\|_A$ is a norm on $\mathcal{H}$ if and only if $A$ is injective, and that $(\mathcal{H},\|\cdot\|_A)$ is complete if and only if the range of $A$ is a closed subspace of $\mathcal{H}$.

Throughout this article, we shall assume that an operator $A\in\mathcal{B}(\mathcal{H})$ is a nonzero positive (semidefinite) operator. Moreover, by an operator we mean a bounded linear operator. In addition, the range and the null space of an operator $T$ are denoted by ${\mathcal R}(T)$ and ${\mathcal N}(T)$, respectively. Also, $T^*$ will be denoted to be the adjoint of $T$.

An operator $S\in\mathcal{B}(\mathcal{H})$ is called an $A$-adjoint of $T$ if for every $x,y\in \mathcal{H}$, the identity $\langle Tx\mid y\rangle_A=\langle x\mid Sy\rangle_A$ holds. The existence of an $A$-adjoint operator is not guaranteed. Observe that $T$ admits an $A$-adjoint operator if and only if the equation $AX = T^*A$ has solution. This kind of equations can be studied by using the next theorem due to Douglas (for its proof see \cite{doug} or \cite{mos2019}).
\begin{theorem}\label{doug}(\cite[Theorem 1]{doug})
If $T, S \in \mathcal{B}(\mathcal{H})$, then the following statements are equivalent:
\begin{enumerate}
\item[{\rm (i)}] $\mathcal{R}(S) \subseteq \mathcal{R}(T)$;
\item[{\rm (ii)}] $TD=S$ for some $D\in \mathcal{B}(\mathcal{H})$;
\item[{\rm (iii)}]$SS^* \leq \lambda^2 TT^*$ for some $\lambda\geq 0$ (or equivalently $\|S^*x\| \leq \lambda\|T^*x\|$ for all $x\in \mathcal{H}$).
\end{enumerate}
If one of these conditions holds, then there exists an unique operator $Q\in\mathcal{B}(\mathcal{H})$ such that $TX=S$ and $\mathcal{R}(Q) \subseteq \overline{\mathcal{R}(T^{*})}$. Furthermore, $\mathcal{N}(Q)=\mathcal{N}(S)$ and
$$\|Q\|^2=\inf\left\{\mu\,;\;SS^*\leq \mu TT^*\right\}.$$
Such $Q$ is called the reduced solution or Douglas solution of $TX=S$.
\end{theorem}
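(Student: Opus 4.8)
The plan is to establish the three equivalences in a cycle $\mathrm{(ii)}\Rightarrow\mathrm{(i)}\Rightarrow\mathrm{(iii)}\Rightarrow\mathrm{(ii)}$, and then to harvest the reduced solution and its properties directly from the construction used in $\mathrm{(iii)}\Rightarrow\mathrm{(ii)}$; this is the classical Douglas factorization argument. The implication $\mathrm{(ii)}\Rightarrow\mathrm{(i)}$ is immediate, since $TD=S$ gives $\mathcal{R}(S)=\mathcal{R}(TD)\subseteq\mathcal{R}(T)$. The equivalence of the two formulations inside $\mathrm{(iii)}$ is a routine computation: $SS^*\leq\lambda^2 TT^*$ means $\langle SS^*x\mid x\rangle\leq\lambda^2\langle TT^*x\mid x\rangle$ for all $x$, i.e. $\|S^*x\|\leq\lambda\|T^*x\|$.

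For $\mathrm{(i)}\Rightarrow\mathrm{(iii)}$ I would first produce a bounded solution of $TX=S$ via the closed graph theorem. Since $\mathcal{R}(S)\subseteq\mathcal{R}(T)$, each $Sy$ has a unique preimage $Cy\in\mathcal{N}(T)^\perp=\overline{\mathcal{R}(T^*)}$ with $TCy=Sy$, defining a linear map $C$. This $C$ is closed: if $y_n\to y$ and $Cy_n\to w$, then continuity of $T$ and $S$ forces $Tw=Sy=TCy$, while membership of $w$ and $Cy$ in the closed subspace $\overline{\mathcal{R}(T^*)}$ forces $w=Cy$. Hence $C$ is bounded, and $SS^*=TCC^*T^*\leq\|C\|^2 TT^*$, which is $\mathrm{(iii)}$ with $\lambda=\|C\|$.

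The key and most delicate step is $\mathrm{(iii)}\Rightarrow\mathrm{(ii)}$, which I would perform so as to simultaneously produce the reduced solution. Assuming $\|S^*x\|\leq\lambda\|T^*x\|$, define $D_0$ on $\mathcal{R}(T^*)$ by $D_0(T^*x)=S^*x$; the hypothesis makes this well defined (if $T^*x=T^*x'$ then $\|S^*(x-x')\|\leq\lambda\|T^*(x-x')\|=0$) and bounded by $\lambda$. I extend $D_0$ by continuity to $\overline{\mathcal{R}(T^*)}$ and set it to $0$ on the complement $\mathcal{N}(T)$, obtaining $D_0\in\mathcal{B}(\mathcal{H})$ with $\|D_0\|\leq\lambda$ and $D_0 T^*=S^*$; taking adjoints gives $TD_0^*=S$, so $Q:=D_0^*$ solves $TX=S$. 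The main obstacle is bookkeeping ranges and nullspaces: because $D_0$ vanishes on $\mathcal{N}(T)$, one gets $\mathcal{R}(Q)=\mathcal{R}(D_0^*)\subseteq\mathcal{N}(D_0)^\perp\subseteq\mathcal{N}(T)^\perp=\overline{\mathcal{R}(T^*)}$, which is exactly the defining property of the reduced solution.

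Finally I would settle uniqueness and the two supplementary identities. If $Q_1,Q_2$ both solve $TX=S$ with ranges in $\overline{\mathcal{R}(T^*)}$, then $\mathcal{R}(Q_1-Q_2)\subseteq\mathcal{N}(T)\cap\overline{\mathcal{R}(T^*)}=\{0\}$, giving uniqueness. The inclusion $\mathcal{N}(Q)\subseteq\mathcal{N}(S)$ is clear from $S=TQ$; conversely $Sx=0$ forces $Qx\in\mathcal{N}(T)\cap\overline{\mathcal{R}(T^*)}=\{0\}$, so $\mathcal{N}(Q)=\mathcal{N}(S)$. For the norm identity, note $SS^*\leq\mu TT^*$ holds precisely when $\|S^*x\|\leq\sqrt{\mu}\,\|T^*x\|$ for all $x$; running the construction above with the optimal constant shows $\|Q\|=\|D_0\|$ equals the least admissible $\sqrt{\mu}$, whence $\|Q\|^2=\inf\{\mu\,:\,SS^*\leq\mu TT^*\}$.
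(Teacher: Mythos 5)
The paper does not actually prove this statement: it is quoted from Douglas's 1966 paper, and the text explicitly refers the reader to the cited references for a proof, so there is nothing in the paper to compare against. Your argument is the classical Douglas factorization proof (closed graph theorem for (i)$\Rightarrow$(iii), the densely defined map $D_0(T^*x)=S^*x$ extended by $0$ on $\mathcal{N}(T)$ for (iii)$\Rightarrow$(ii), with the reduced solution $Q=D_0^*$) and it is correct; the one step worth making explicit is the reverse inequality in the norm identity, namely that $SS^*=TQQ^*T^*\leq\|Q\|^2\,TT^*$ shows $\|Q\|^2$ is itself an admissible $\mu$, which combined with your bound $\|Q\|=\|D_0\|\leq\sqrt{\mu}$ for every admissible $\mu$ gives $\|Q\|^2=\inf\{\mu\,;\;SS^*\leq\mu TT^*\}$.
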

Therefore, if we denote by $\mathcal{B}_{A}(\mathcal{H})$ the subalgebra of $\mathcal{B}(\mathcal{H})$ of all operators which admit an $A$-adjoint
operator, then by Theorem \ref{doug} we see that
$$\mathcal{B}_{A}(\mathcal{H})=\left\{T\in \mathcal{B}(\mathcal{H})\,;\;\mathcal{R}(T^{*}A)\subset \mathcal{R}(A)\right\}.$$
Let $T\in\mathcal{B}_{A}(\mathcal{H})$. The Douglas solution of the equation $AX = T^*A$ is a distinguished $A$-adjoint operator of $T$, which is denoted by $T^{\sharp_A}$. Note that, $T^{\sharp_A} = A^{\dag}T^*A$ in which $A^{\dag}$ is denoted to be the Moore-Penrose inverse of $A$ (see \cite{acg2}). It is important to mention that if $T\in\mathcal{B}_{A}(\mathcal{H})$, then $T^{\sharp_A}\in\mathcal{B}_{A}(\mathcal{H})$, $\|T^{\sharp_A}\|_A=\|T\|_A$ and $(T^{\sharp_A})^{\sharp_A} = P_{\overline{\mathcal{R}(A)}}TP_{\overline{\mathcal{R}(A)}}$. Here, $P_{\overline{\mathcal{R}(A)}}$ denotes the orthogonal projection onto $\overline{\mathcal{R}(A)}$. Furthermore, if $T, S\in\mathcal{B}_{A}(\mathcal{H})$, then $(TS)^{\sharp_A} = S^{\sharp_A}T^{\sharp_A}$. In addition, an operator $U\in  \mathcal{B}_A(\mathcal{H})$ is said to be $A$-unitary if $\|U^{\sharp_A}x\|_A= \|Ux\|_A=\|x\|_A$ for all $x\in \mathcal{H}$. For more details, the reader is invited to consult \cite{acg1,acg2,bakfeki01,bakfeki04} and their references.

Furthermore, again by applying Douglas theorem we obtain
\begin{equation}\label{abbbbbbbb}
\mathcal{B}_{A^{1/2}}(\mathcal{H})=\left\{T \in \mathcal{B}(\mathcal{H})\,;\;\exists \,\lambda > 0\,;\;\|Tx\|_{A} \leq \lambda \|x\|_{A},\;\forall\,x\in \mathcal{H}  \right\}.
\end{equation}
Operators in $\mathcal{B}_{A^{1/2}}(\mathcal{H})$ are called $A$-bounded. It should be mention here that $\mathcal{B}_{A}(\mathcal{H})$ and $\mathcal{B}_{A^{1/2}}(\mathcal{H}))$ are two subalgebras of $\mathcal{B}(\mathcal{H})$ which are neither closed nor dense in $\mathcal{B}(\mathcal{H})$. Moreover, we have $\mathcal{B}_{A}(\mathcal{H}) \subseteq \mathcal{B}_{A^{1/2}}(\mathcal{H}))$ (see \cite[Proposition 1.2.]{acg3}).

The semi-inner product $\langle\cdot\mid\cdot\rangle_{A}$ induces the following seminorm on $\mathcal{B}_{A^{1/2}}(\mathcal{H})$:
\begin{equation}\label{semiiineq}
\|T\|_A:=\sup_{\substack{x\in \overline{\mathcal{R}(A)},\\ x\not=0}}\frac{\|Tx\|_A}{\|x\|_A}=\sup\left\{\|Tx\|_{A}\,;\;x\in \mathcal{H},\,\|x\|_{A}= 1\right\}<\infty.
\end{equation}
If $A=I$, we get the classical definition of the operator norm of an operator $T$ which will be denoted by $\|T\|$. It was shown in \cite{fg} that for every $T\in\mathcal{B}_{A^{1/2}}(\mathcal{H})$ we have
\begin{equation}\label{fg}
\|T\|_A=\sup\left\{|\langle Tx\mid y\rangle_A|\,;\;x,y\in \mathcal{H},\,\|x\|_{A}=\|y\|_{A}= 1\right\}.
\end{equation}
In addition, for every $T\in\mathcal{B}_{A}(\mathcal{H})$ we have
\begin{equation}\label{diez}
\|T\|_A^2={\|T^{\sharp_A}T\|}_A = {\|TT^{\sharp_A}\|}_A.
\end{equation}
The $A$-the numerical radius and the $A$-spectral radius of an $A$-bounded operator $T\in\mathcal{B}_{A^{1/2}}(\mathcal{H})$ are defined by
\begin{align*}
\omega_A(T) = \sup\Big\{\big|{\langle Tx\mid x\rangle}_A\big|: \,\,x\in \mathcal{H}, \,{\|x\|}_A = 1\Big\}\;\text{ and }
\end{align*}
\begin{equation}\label{newrad}
r_A(T):=\displaystyle\inf_{n\in \mathbb{N}^*}\|T^n\|_A^{\frac{1}{n}}=\displaystyle\lim_{n\to\infty}\|T^n\|_A^{\frac{1}{n}},
\end{equation}
respectively. Notice that the second equality in \eqref{newrad} is proved in \cite{feki01}. If $A=I$, the spectral and numerical radius of $T$ will be simply denoted by $r(T)$ and $\omega(T)$ respectively.

It is well known that $\omega_A(\cdot)$ defines a seminorm on $\mathcal{B}_{A^{1/2}}(\mathcal{H})$, which is equivalent to the $A$-operator seminorm $ \left\| \cdot \right\|_A $, more precisely,
\begin{equation*}
\tfrac{1}{2} \left\|T\right\|_A \leq \omega_A(T)\leq \left\|T\right\|_A,
\end{equation*}
for every $T\in\mathcal{B}_{A^{1/2}}(\mathcal{H})$. Moreover, it was shown in \cite{feki01} that for $T\in\mathcal{B}_{A^{1/2}}(\mathcal{H})$, it holds
\begin{equation*}
\omega_A(T)\leq \frac{1}{2}\left(\|T\|_A+\|T^2\|_A^{1/2}\right).
\end{equation*}
So, clearly, if $T\in\mathcal{B}_{A^{1/2}}(\mathcal{H})$ and satisfies $AT^2=0$, then
\begin{equation}\label{at2}
\omega_A(T)= \frac{1}{2}\|T\|_A.
\end{equation}

It should be emphasized here that for every $T\in\mathcal{B}_{A^{1/2}}(\mathcal{H})$ we have
\begin{equation}\label{dom}
r_A(T) \leq \omega_A(T).
\end{equation}
Also $r_A(\cdot)$ satisfies the commutativity property, which asserts that
\begin{equation}\label{commut}
r_A(TS)=r_A(ST),
\end{equation}
for every $T,S\in \mathcal{B}_{A^{1/2}}(\mathcal{H})$.

An operator $T\in\mathcal{B}(\mathcal{H})$ is said to be $A$-selfadjoint if $AT$ is selfadjoint, that is, $AT = T^*A$. Moreover, it was shown in \cite{feki01} that if $T$ is $A$-self-adjoint, then
\begin{equation}\label{aself1}
\|T\|_{A}=\omega_A(T)=r_A(T).
\end{equation}

For the sequel, for any arbitrary operator $T\in {\mathcal B}_A({\mathcal H})$, we write
$$\Re_A(T):=\frac{T+T^{\sharp_A}}{2}\;\;\text{ and }\;\;\Im_A(T):=\frac{T-T^{\sharp_A}}{2i}.$$
It has recently been shown in \cite[Theorem 2.5]{zamani1} that if $T\in\mathcal{B}_{A}(\mathcal{H})$, then
\begin{align}\label{zamnum}
\omega_A(T) = \displaystyle{\sup_{\theta \in \mathbb{R}}}{\left\|\Re_A(e^{i\theta}T)\right\|}_A.
\end{align}

Recently, many results covering some classes of operators on a complex Hilbert space $\big(\mathcal{H}, \langle \cdot\mid \cdot\rangle\big)$
are extended to $\big(\mathcal{H}, {\langle \cdot\mid \cdot\rangle}_A\big)$ (see, e.g., \cite{fekisidha2019,feki01,zamani2,bakfeki01,bakfeki04,zamani1,majsecesuci}).

In in this work, we consider the following diagonal operator matrix whose each diagonal entry is $A$:
 \begin{equation*}
\mathbb{A}=\left( {\begin{array}{*{20}{c}}
   {A} & {} & {0} & {}  \\
   {} & {A} &  & {}  \\
   {} &  &  \ddots  & {}  \\
   {} & {0} & {} & {A}  \\
\end{array}} \right),
\end{equation*}
acting on the Hilbert space $\mathbb{H}=\oplus_{i=1}^d\mathcal{H}$ equipped with the following inner-product:
$$\langle x, y\rangle=\sum_{k=1}^d\langle x_k\mid y_k\rangle,$$
 for all $x=(x_1,\cdots,x_d)\in \mathbb{H}$ and $y=(y_1,\cdots,y_d)\in \mathbb{H}$. The semi-inner product induced by the positive operator $\mathbb{A}$ is given by
$$\langle x, y\rangle_{\mathbb{A}}= \langle \mathbb{A}x, y\rangle=\sum_{k=1}^d\langle Ax_k\mid y_k\rangle=\sum_{k=1}^d\langle x_k\mid y_k\rangle_A,$$
 for all $x=(x_1,\cdots,x_d)\in \mathbb{H}$ and $y=(y_1,\cdots,y_d)\in \mathbb{H}$. The purpose of this paper is to establish several inequalities for $\omega_\mathbb{A}(\mathbb{T})$, where $\mathbb{T}=(T_{ij})$ is a $d\times d$ operator matrix with $T_{ij}$ are $A$-bounded operators. The inspiration for our investigation comes from \cite{OK2,S.D.M,BP,bpnayek}.

\section{Results}
In this section, we present our results. To prove our first result, we need the following lemmas.
\begin{lemma}\label{mjom}(\cite{feki03})
Let $\mathbb{T}= (T_{ij})_{d \times d}$ be a $d \times d$ operator matrix be such that $T_{ij}\in \mathcal{B}_{A^{1/2}}(\mathcal{H})$ for all $i,j$. Then, $$r_{\mathbb{A}}(\mathbb{T})\leq r(\|T_{ij} \|_A).$$
\end{lemma}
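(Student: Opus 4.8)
The plan is to work from the Gelfand-type formula \eqref{newrad}, applied to the positive operator $\mathbb{A}$, which gives $r_{\mathbb{A}}(\mathbb{T})=\lim_{n\to\infty}\|\mathbb{T}^n\|_{\mathbb{A}}^{1/n}$, and then to dominate $\|\mathbb{T}^n\|_{\mathbb{A}}$ by the corresponding power of the nonnegative scalar matrix $M:=[\|T_{ij}\|_A]\in\mathbb{M}_d(\mathbb{R})$. Two elementary facts about $\|\cdot\|_A$ will be needed. First, $\|\cdot\|_A$ is submultiplicative on $\mathcal{B}_{A^{1/2}}(\mathcal{H})$: for $S,T\in\mathcal{B}_{A^{1/2}}(\mathcal{H})$ the operator seminorm bound from \eqref{semiiineq} gives $\|STx\|_A\le\|S\|_A\|Tx\|_A\le\|S\|_A\|T\|_A\|x\|_A$, whence $\|ST\|_A\le\|S\|_A\|T\|_A$. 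Writing out the $(i,j)$ entry of the power as $(\mathbb{T}^n)_{ij}=\sum_{k_1,\dots,k_{n-1}=1}^{d}T_{ik_1}T_{k_1k_2}\cdots T_{k_{n-1}j}$ and applying the triangle inequality together with submultiplicativity yields the entrywise domination
$$\|(\mathbb{T}^n)_{ij}\|_A\le (M^n)_{ij}\qquad(1\le i,j\le d).$$

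Second, I would establish the auxiliary inequality $\|\mathbb{S}\|_{\mathbb{A}}\le \big\|[\|S_{ij}\|_A]\big\|$ for every operator matrix $\mathbb{S}=(S_{ij})$ with $A$-bounded entries, where the norm on the right is the spectral norm of the nonnegative matrix of entrywise seminorms. This follows by a direct computation: for $x=(x_1,\dots,x_d)\in\mathbb{H}$ the triangle inequality in $(\mathcal{H},\|\cdot\|_A)$ gives $\|(\mathbb{S}x)_i\|_A=\big\|\sum_j S_{ij}x_j\big\|_A\le\sum_j\|S_{ij}\|_A\|x_j\|_A$, so that with $v:=(\|x_1\|_A,\dots,\|x_d\|_A)\in\mathbb{R}^d$ one has $\|\mathbb{S}x\|_{\mathbb{A}}^2=\sum_i\|(\mathbb{S}x)_i\|_A^2\le\big\|[\|S_{ij}\|_A]\,v\big\|_2^2\le\big\|[\|S_{ij}\|_A]\big\|^2\|v\|_2^2$, and $\|v\|_2=\|x\|_{\mathbb{A}}$. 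In particular, taking $\mathbb{S}=\mathbb{T}$ shows $\mathbb{T}\in\mathcal{B}_{\mathbb{A}^{1/2}}(\mathbb{H})$, so that $r_{\mathbb{A}}(\mathbb{T})$ is well defined.

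Combining the two steps, and using that the spectral norm is monotone under entrywise domination of nonnegative matrices (so $\big\|[\|(\mathbb{T}^n)_{ij}\|_A]\big\|\le\|M^n\|$), I obtain $\|\mathbb{T}^n\|_{\mathbb{A}}\le\|M^n\|$ for every $n\in\mathbb{N}^*$. Taking $n$-th roots and letting $n\to\infty$, the left-hand side tends to $r_{\mathbb{A}}(\mathbb{T})$ by \eqref{newrad}, while the right-hand side tends to the spectral radius $r(M)=r([\|T_{ij}\|_A])$ by Gelfand's formula; this gives the claimed inequality.

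The step I expect to require the most care is the interaction between the entrywise bound and the matrix norm, namely the monotonicity $0\le B\le C\Rightarrow\|B\|\le\|C\|$ for nonnegative matrices under the spectral norm. Here positivity of the entries is essential: it lets one restrict the variational description of the spectral norm to vectors with nonnegative coordinates, on which entrywise domination transfers directly. If one prefers to sidestep this point, the Frobenius norm may be used throughout in place of the spectral norm, since entrywise domination of nonnegative matrices passes trivially to the Frobenius norm and Gelfand's formula holds for every matrix norm.
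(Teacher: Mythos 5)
Your argument is correct, and it is essentially the canonical proof of this lemma: the paper itself states it without proof, quoting it from \cite{feki03}, where the argument runs along exactly the lines you propose, namely the norm domination $\|\mathbb{S}\|_{\mathbb{A}}\leq\big\|(\|S_{ij}\|_A)\big\|$ (which is precisely \eqref{tag0} of Lemma \ref{lemmajdid}), the entrywise bound $\|(\mathbb{T}^n)_{ij}\|_A\leq (M^n)_{ij}$ coming from submultiplicativity of $\|\cdot\|_A$, and the Gelfand-type formula \eqref{newrad}. Your care with the monotonicity of the spectral norm on entrywise-dominated nonnegative matrices (or the Frobenius-norm workaround) is exactly the right point to flag, and the step is sound.
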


\begin{lemma}\label{ir2020}(\cite{bhunfekipaul})
Let $\mathbb{T}= (T_{ij})_{d \times d}$ be such that $T_{ij}\in \mathcal{B}_{A}(\mathcal{H})$ for all $i,j$. Then, $\mathbb{T}\in\mathcal{B}_{\mathbb{A}}(\mathbb{H})$ and
$$\mathbb{T}^{\sharp_\mathbb{A}}=(T_{ji}^{\sharp_\mathbb{A}})_{d \times d}.$$
\end{lemma}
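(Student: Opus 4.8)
The plan is to exhibit the claimed operator matrix as an $\mathbb{A}$-adjoint of $\mathbb{T}$ and then identify it with the distinguished reduced solution $\mathbb{T}^{\sharp_\mathbb{A}}$ via the uniqueness clause of Douglas' theorem. First I would set $\mathbb{S}=(S_{ij})_{d\times d}$ with $S_{ij}=T_{ji}^{\sharp_A}$, and directly check the defining identity $\langle \mathbb{T}x,y\rangle_{\mathbb{A}}=\langle x,\mathbb{S}y\rangle_{\mathbb{A}}$ for all $x=(x_1,\dots,x_d)$ and $y=(y_1,\dots,y_d)$ in $\mathbb{H}$. Expanding the $\mathbb{A}$-inner product as $\sum_{i}\langle(\mathbb{T}x)_i\mid y_i\rangle_A=\sum_{i,j}\langle T_{ij}x_j\mid y_i\rangle_A$ and applying the $A$-adjoint identity $\langle T_{ij}x_j\mid y_i\rangle_A=\langle x_j\mid T_{ij}^{\sharp_A}y_i\rangle_A$ (valid since each $T_{ij}\in\mathcal{B}_A(\mathcal{H})$), a relabelling of the summation indices reproduces $\sum_{j}\langle x_j\mid(\mathbb{S}y)_j\rangle_A=\langle x,\mathbb{S}y\rangle_{\mathbb{A}}$. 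This shows $\mathbb{S}$ is an $\mathbb{A}$-adjoint of $\mathbb{T}$, and in particular that $\mathbb{T}$ admits an $\mathbb{A}$-adjoint, i.e. $\mathbb{T}\in\mathcal{B}_{\mathbb{A}}(\mathbb{H})$.

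To upgrade $\mathbb{S}$ from being merely some $\mathbb{A}$-adjoint to being the Douglas solution $\mathbb{T}^{\sharp_\mathbb{A}}$, I would verify the distinguishing range condition $\mathcal{R}(\mathbb{S})\subseteq\overline{\mathcal{R}(\mathbb{A})}$. Here I would use two facts: each reduced solution satisfies $\mathcal{R}(T_{ij}^{\sharp_A})\subseteq\overline{\mathcal{R}(A)}$, and, since $\mathbb{A}$ is diagonal with $d$ equal diagonal blocks, $\overline{\mathcal{R}(\mathbb{A})}=\oplus_{i=1}^d\overline{\mathcal{R}(A)}$ (a finite direct sum of closed subspaces). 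Computing the $j$-th component $(\mathbb{S}y)_j=\sum_{i=1}^d T_{ij}^{\sharp_A}y_i$, each summand lies in the subspace $\overline{\mathcal{R}(A)}$, hence so does the whole component; therefore $\mathbb{S}y\in\oplus_{j}\overline{\mathcal{R}(A)}=\overline{\mathcal{R}(\mathbb{A})}$, giving $\mathcal{R}(\mathbb{S})\subseteq\overline{\mathcal{R}(\mathbb{A})}$.

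With both properties in hand, the uniqueness statement in Theorem \ref{doug}, applied to the equation $\mathbb{A}X=\mathbb{T}^*\mathbb{A}$, forces $\mathbb{S}=\mathbb{T}^{\sharp_\mathbb{A}}$, which is exactly the asserted formula. I expect no deep obstacle here: the only points demanding care are the index transposition $S_{ij}=T_{ji}^{\sharp_A}$ (so that the bookkeeping in the sesquilinear computation comes out right) and the identification $\overline{\mathcal{R}(\mathbb{A})}=\oplus_{i=1}^d\overline{\mathcal{R}(A)}$, which is what lets the componentwise range bounds assemble into the global range condition. As an alternative route, one could instead invoke the closed formula $T^{\sharp_A}=A^{\dag}T^*A$ together with $\mathbb{A}^{\dag}=\mathrm{diag}(A^{\dag},\dots,A^{\dag})$ and $\mathbb{T}^*=(T_{ji}^*)$, and simply read off the $(i,j)$ block of $\mathbb{A}^{\dag}\mathbb{T}^*\mathbb{A}$ as $A^{\dag}T_{ji}^*A=T_{ji}^{\sharp_A}$; this bypasses the uniqueness argument but requires separately justifying that the Moore--Penrose inverse of the diagonal operator $\mathbb{A}$ is block-diagonal.
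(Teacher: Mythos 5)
Your argument is correct. Note, however, that the paper itself offers no proof of this lemma: it is quoted verbatim from the reference \cite{bhunfekipaul}, so there is no in-paper argument to compare yours against. What you give is a sound self-contained derivation: the sesquilinear computation correctly shows that $\mathbb{S}=(T_{ji}^{\sharp_A})$ satisfies $\mathbb{A}\mathbb{S}=\mathbb{T}^{*}\mathbb{A}$ (hence $\mathbb{T}\in\mathcal{B}_{\mathbb{A}}(\mathbb{H})$), and the range condition $\mathcal{R}(\mathbb{S})\subseteq\oplus_{i=1}^{d}\overline{\mathcal{R}(A)}=\overline{\mathcal{R}(\mathbb{A})}$ together with the uniqueness clause of Theorem \ref{doug} pins down $\mathbb{S}=\mathbb{T}^{\sharp_{\mathbb{A}}}$; your alternative route via $\mathbb{A}^{\dag}\mathbb{T}^{*}\mathbb{A}$ with the block-diagonal Moore--Penrose inverse is the computation one finds in the cited source.
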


\begin{lemma}\label{weak}(\cite{bhunfekipaul})
Let $T\in \mathcal{B}_{A^{1/2}}(\mathcal{H})$. Then,
\begin{equation*}
\omega_A(U^{\sharp_A}TU)=\omega_A(T),
\end{equation*}
for any $A$-unitary operator $U\in\mathcal{B}_A(\mathcal{H})$.
\end{lemma}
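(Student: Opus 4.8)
The plan is to reduce the conjugated numerical radius to a numerical radius over the image of the $A$-unit sphere under $U$, and then to exploit the fact that an $A$-unitary operator preserves this sphere in both directions. \emph{First}, I would rewrite the quadratic form. Since $U^{\sharp_A}$ is the $A$-adjoint of $U$, the defining relation $\langle Ux \mid y\rangle_A = \langle x \mid U^{\sharp_A}y\rangle_A$ yields, after conjugation, $\langle U^{\sharp_A}w \mid x\rangle_A = \langle w \mid Ux\rangle_A$. Applying this with $w = TUx$ gives
\begin{equation*}
\langle U^{\sharp_A}TUx \mid x\rangle_A = \langle TUx \mid Ux\rangle_A = \langle T(Ux) \mid Ux\rangle_A,
\end{equation*}
for every $x \in \mathcal{H}$, and hence
\begin{equation*}
\omega_A(U^{\sharp_A}TU) = \sup_{\|x\|_A = 1} \big|\langle T(Ux) \mid Ux\rangle_A\big|.
\end{equation*}

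\emph{Second}, because $U$ is $A$-unitary we have $\|Ux\|_A = \|x\|_A$, so the map $x \mapsto Ux$ carries the $A$-unit sphere into itself. Consequently each term $\big|\langle T(Ux) \mid Ux\rangle_A\big|$ with $\|x\|_A = 1$ is one of the terms defining $\omega_A(T)$, and passing to the supremum yields the easy inequality $\omega_A(U^{\sharp_A}TU) \leq \omega_A(T)$.

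\emph{Third}, for the reverse inequality I would show that the image $\{Ux : \|x\|_A = 1\}$ exhausts the $A$-unit sphere modulo $A$-null perturbations. The key is the identity $UU^{\sharp_A} = P_{\overline{\mathcal{R}(A)}}$ in the $A$-sense. Using the adjoint relation $\langle Ua \mid b\rangle_A = \langle a \mid U^{\sharp_A}b\rangle_A$ with $a = U^{\sharp_A}w$, together with the $A$-unitarity $\|U^{\sharp_A}w\|_A = \|w\|_A$, one finds $\langle UU^{\sharp_A}w \mid w\rangle_A = \|U^{\sharp_A}w\|_A^2 = \|w\|_A^2 = \langle P_{\overline{\mathcal{R}(A)}}w \mid w\rangle_A$, and polarization upgrades this to $\|UU^{\sharp_A}z - z\|_A = 0$ for every $z$. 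Given any $z$ with $\|z\|_A = 1$, set $x = U^{\sharp_A}z$; then $\|x\|_A = \|U^{\sharp_A}z\|_A = \|z\|_A = 1$ and $\|Ux - z\|_A = \|UU^{\sharp_A}z - z\|_A = 0$. Since $T$ is $A$-bounded, the estimate $\|Tw - Tz\|_A \leq \|T\|_A\|w - z\|_A$ and two applications of the Cauchy--Schwarz inequality for $\|\cdot\|_A$ show that $A$-null differences leave the quadratic form unchanged, so $\langle T(Ux) \mid Ux\rangle_A = \langle Tz \mid z\rangle_A$. Thus $|\langle Tz \mid z\rangle_A| \leq \omega_A(U^{\sharp_A}TU)$, and taking the supremum over $\|z\|_A = 1$ gives $\omega_A(T) \leq \omega_A(U^{\sharp_A}TU)$, completing the argument.

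The main obstacle is the degeneracy of the seminorm $\|\cdot\|_A$: the operator $U$ need not be surjective on $\mathcal{H}$, so one cannot simply invert it to run the second step in reverse. The correct substitute is the $A$-sense identity $UU^{\sharp_A} = P_{\overline{\mathcal{R}(A)}}$, combined with the observation that $\omega_A(\cdot)$ is insensitive to perturbations of $A$-seminorm zero. Making these two points precise is the crux; the remaining manipulations are direct computations with the semi-inner product.
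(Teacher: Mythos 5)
Your proof is correct. Note that the paper itself gives no proof of this lemma; it is quoted from the reference \cite{bhunfekipaul}, so there is no internal argument to compare against. As a self-contained verification your proposal works: the identity $\langle U^{\sharp_A}TUx\mid x\rangle_A=\langle TUx\mid Ux\rangle_A$ together with $\|Ux\|_A=\|x\|_A$ gives the inequality $\omega_A(U^{\sharp_A}TU)\leq\omega_A(T)$ immediately, and you correctly identify that the only delicate point is the reverse inequality, where $U$ cannot be inverted. Your resolution is sound: $\langle (UU^{\sharp_A}-I)w\mid w\rangle_A=\|U^{\sharp_A}w\|_A^2-\|w\|_A^2=0$ for all $w$, and polarization of the associated sesquilinear form (valid because the scalars are complex) gives $A(UU^{\sharp_A}-I)=0$, hence $\|UU^{\sharp_A}z-z\|_A=0$; then the $A$-boundedness of $T$ and Cauchy--Schwarz for $\langle\cdot\mid\cdot\rangle_A$ show the quadratic form of $T$ does not see this $A$-null perturbation, so the vector $x=U^{\sharp_A}z$ (which has $\|x\|_A=1$ by the definition of $A$-unitarity, which requires $\|U^{\sharp_A}z\|_A=\|z\|_A$ as well as $\|Uz\|_A=\|z\|_A$) witnesses $|\langle Tz\mid z\rangle_A|\leq\omega_A(U^{\sharp_A}TU)$. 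One could shorten the last step by noting directly that $\langle T UU^{\sharp_A}z\mid UU^{\sharp_A}z\rangle_A=\langle Tz\mid z\rangle_A$ follows from $AUU^{\sharp_A}=A$ and $\|T(UU^{\sharp_A}z-z)\|_A\le\|T\|_A\,\|UU^{\sharp_A}z-z\|_A=0$, but this is exactly the computation you describe.
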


Now, we are in a position to prove the following theorem.
\begin{theorem}\label{thf1}
Let $\mathbb{T}=(T_{ij})$ be a ${d\times d}$ operator matrix where $T_{ij}\in \mathcal{B}_{A}(\mathcal{H})$. Then,
\begin{equation*}
\omega_{\mathbb{A}}(\mathbb{T})\leq \frac{1}{2}\sum_{i=1}^d\left(\|T_{ii}\|_A+\sqrt{\left\|T_{ii}T_{ii}^{\sharp_A}+\sum^d_{j=1,j\neq i}T_{ij}T_{ij}^{\sharp_A}\right\|_A}\right).
\end{equation*}
\end{theorem}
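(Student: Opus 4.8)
The plan is to reduce the problem to a single row of $\mathbb T$ and then to prove a sharp per-row estimate by analysing the quadratic form directly. I would write $\mathbb T=\sum_{i=1}^d\mathbb T_i$, where $\mathbb T_i$ is the operator matrix whose $i$-th row coincides with that of $\mathbb T$ and whose other rows vanish; note $\mathbb T_i\in\mathcal B_{\mathbb A}(\mathbb H)$ by \cref{ir2020}. Since $\omega_{\mathbb A}(\cdot)$ is a seminorm, subadditivity gives $\omega_{\mathbb A}(\mathbb T)\le\sum_{i=1}^d\omega_{\mathbb A}(\mathbb T_i)$, so it suffices to show, for each $i$, that $\omega_{\mathbb A}(\mathbb T_i)\le\tfrac12\big(\|T_{ii}\|_A+\|S_i\|_A^{1/2}\big)$, where $S_i:=\sum_{j=1}^dT_{ij}T_{ij}^{\sharp_A}$. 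I first record that $S_i$ is $A$-positive and satisfies $\langle S_iy\mid y\rangle_A=\sum_{j=1}^d\|T_{ij}^{\sharp_A}y\|_A^2$, so that $\langle S_iy\mid y\rangle_A\le\|S_i\|_A$ whenever $\|y\|_A=1$.

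Next I would estimate $\langle\mathbb T_ix\mid x\rangle_{\mathbb A}$ for a unit vector $x=(x_1,\dots,x_d)$. Because only the $i$-th row of $\mathbb T_i$ is nonzero,
\[
\langle\mathbb T_ix\mid x\rangle_{\mathbb A}=\Big\langle\sum_{j=1}^dT_{ij}x_j\;\Big|\;x_i\Big\rangle_A=\langle T_{ii}x_i\mid x_i\rangle_A+\sum_{j\ne i}\langle x_j\mid T_{ij}^{\sharp_A}x_i\rangle_A .
\]
Applying the triangle inequality and then Cauchy--Schwarz to the off-diagonal sum (first in $\langle\cdot\mid\cdot\rangle_A$, then in $\mathbb C^{d-1}$), and setting $s:=\|x_i\|_A$ so that $\sum_{j\ne i}\|x_j\|_A^2=1-s^2$, I get
\[
|\langle\mathbb T_ix\mid x\rangle_{\mathbb A}|\le|\langle T_{ii}x_i\mid x_i\rangle_A|+\sqrt{1-s^2}\Big(\sum_{j\ne i}\|T_{ij}^{\sharp_A}x_i\|_A^2\Big)^{1/2}.
\]
For $s>0$ I write $x_i=s\hat u$ with $\|\hat u\|_A=1$, set $a:=|\langle T_{ii}\hat u\mid\hat u\rangle_A|$ and $b^2:=\sum_{j\ne i}\|T_{ij}^{\sharp_A}\hat u\|_A^2$; the right-hand side then equals $as^2+bs\sqrt{1-s^2}$, which is at most $\tfrac12\big(a+\sqrt{a^2+b^2}\big)$ by the elementary maximization $\max_{s\in[0,1]}(as^2+bs\sqrt{1-s^2})=\tfrac12(a+\sqrt{a^2+b^2})$ (the cases $s=0$ and $\|x_i\|_A=0$ being trivial).

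It remains to control $a$ and $a^2+b^2$ for fixed $\hat u$. On one hand $a\le\omega_A(T_{ii})\le\|T_{ii}\|_A$. On the other hand the decisive move is to bound $a$ by placing the adjoint on the second factor: $a=|\langle\hat u\mid T_{ii}^{\sharp_A}\hat u\rangle_A|\le\|T_{ii}^{\sharp_A}\hat u\|_A$, whence
\[
a^2+b^2\le\|T_{ii}^{\sharp_A}\hat u\|_A^2+\sum_{j\ne i}\|T_{ij}^{\sharp_A}\hat u\|_A^2=\langle S_i\hat u\mid\hat u\rangle_A\le\|S_i\|_A .
\]
Combining the two bounds pointwise in $\hat u$ gives $|\langle\mathbb T_ix\mid x\rangle_{\mathbb A}|\le\tfrac12\|T_{ii}\|_A+\tfrac12\|S_i\|_A^{1/2}$ for every unit $x$, hence $\omega_{\mathbb A}(\mathbb T_i)\le\tfrac12(\|T_{ii}\|_A+\|S_i\|_A^{1/2})$, and summing over $i$ yields the theorem.

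The main obstacle, and the reason the estimate is delicate, is precisely this last fusion. The naive routes—applying $\omega_A(T)\le\tfrac12(\|T\|_A+\|T^2\|_A^{1/2})$ to $\mathbb T_i$, or splitting $\mathbb T_i$ into its diagonal entry plus the square-zero off-diagonal row and using $\omega_A=\tfrac12\|\cdot\|_A$ on the latter—produce the separated quantity $\|T_{ii}\|_A^2+\big\|\sum_{j\ne i}T_{ij}T_{ij}^{\sharp_A}\big\|_A$ rather than the combined norm $\|S_i\|_A$, and are in general strictly too weak. Keeping the full row action $\sum_jT_{ij}x_j$ together, reducing to the one-variable maximization in $s$, and estimating $|\langle T_{ii}\hat u\mid\hat u\rangle_A|^2$ by $\|T_{ii}^{\sharp_A}\hat u\|_A^2$ is exactly what allows $a^2$ and $b^2$ to merge into the single quadratic form of $S_i$; the $A$-positivity identity for $S_i$ and the scalar maximization are the only computations, and both are routine.
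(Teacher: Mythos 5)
Your proof is correct, and it takes a genuinely different route from the paper's. The paper proves the single-row estimate \eqref{first01} by passing through the identity \eqref{zamnum}, writing $\|\Re_{\mathbb{A}}(e^{i\theta}\mathbb{S})\|_{\mathbb{A}}$ as an $\mathbb{A}$-spectral radius via \eqref{aself1}, factoring $e^{i\theta}\mathbb{S}+e^{-i\theta}\mathbb{S}^{\sharp_{\mathbb{A}}}$ as a product of two operator matrices, swapping the factors with the commutativity property \eqref{commut}, and then dominating $r_{\mathbb{A}}$ by the spectral radius of the $2\times 2$ matrix of seminorms (Lemma \ref{mjom}); the remaining rows are handled by conjugating with the $\mathbb{A}$-unitary permutations $\mathbb{U}_k$ and invoking Lemma \ref{weak}. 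You instead estimate the quadratic form $\langle\mathbb{T}_ix\mid x\rangle_{\mathbb{A}}$ directly: Cauchy--Schwarz twice, the scalar maximization $\max_{s\in[0,1]}(as^2+bs\sqrt{1-s^2})=\tfrac12(a+\sqrt{a^2+b^2})$, and the two separate bounds $a\le\|T_{ii}\|_A$ and $a\le\|T_{ii}^{\sharp_A}\hat u\|_A$ (both uses being legitimate since $\tfrac12(a+\sqrt{a^2+b^2})$ is monotone in each argument), which is exactly what fuses $a^2+b^2$ into the single quadratic form $\langle S_i\hat u\mid\hat u\rangle_A\le\|S_i\|_A$. Your argument is more elementary and self-contained --- it needs none of Lemmas \ref{mjom} and \ref{weak}, the spectral radius machinery, or the permutation matrices, and it treats all rows symmetrically --- while the paper's factorization-plus-commutativity scheme is the template reused in the later theorems and mirrors the classical Abu-Omar--Kittaneh argument. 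The only points worth making fully explicit in a write-up are the degenerate case $\|x_i\|_A=0$ (where both terms vanish by Cauchy--Schwarz for the semi-inner product, as you note) and the identity $\|T_{ij}^{\sharp_A}\hat u\|_A^2=\langle T_{ij}T_{ij}^{\sharp_A}\hat u\mid\hat u\rangle_A$ underlying your formula for $\langle S_i\hat u\mid\hat u\rangle_A$; both are routine.
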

\begin{proof}
We first prove that
\begin{equation}\label{first01}
\omega_{\mathbb{A}}(\mathbb{S})\leq \frac{1}{2}\left(\|T_{11}\|_A+\sqrt{\left\|\sum^d_{j=1}T_{1j}T_{1j}^{\sharp_A}\right\|_A}\right),
\end{equation}
where $\mathbb{S}=\begin{pmatrix}
T_{11} & T_{12} &\cdots& T_{1d}\\
0 &0 &\cdots& 0\\
\vdots & \vdots & \vdots & \vdots\\
0 & 0 &\cdots& 0\\
\end{pmatrix}$. Let $\theta\in \mathbb{R}$. It is not difficult to verify that $\Re_A(e^{i\theta}\mathbb{S})$ is an $\mathbb{A}$-self-adjoint operator. So, by \eqref{aself1} we have
\begin{align}\label{rr1}
r_{\mathbb{A}}\left(\Re_\mathbb{A}(e^{i\theta}\mathbb{S})\right)=\|\Re_\mathbb{A}(e^{i\theta}\mathbb{S})\|_{\mathbb{A}}.
\end{align}
On the other hand, by using Lemma \ref{ir2020} we see that
\begin{align*}
r_{\mathbb{A}}\left[\Re_A(e^{i\theta}\mathbb{S})\right]
& =\tfrac{1}{2}r_{\mathbb{A}}(e^{i\theta}\mathbb{S}+e^{-i\theta}\mathbb{S}^{\sharp_{\mathbb{A}}})\\
 &=\frac{1}{2}r_{\mathbb{A}}\left[e^{i\theta}\begin{pmatrix}
T_{11} & T_{12} &\cdots& T_{1d}\\
0 &0 &\cdots& 0\\
\vdots & \vdots & \vdots & \vdots\\
0 & 0 &\cdots& 0\\
\end{pmatrix}+e^{-i\theta}\begin{pmatrix}
T_{11}^{\sharp_{A}} & 0 &\cdots& 0\\
T_{12}^{\sharp_{A}} &0 &\cdots& 0\\
\vdots & \vdots & \vdots & \vdots\\
T_{1d}^{\sharp_{A}} & 0 &\cdots& 0\\
\end{pmatrix}\right]\\
&=\frac{1}{2}r_{\mathbb{A}}\left[\begin{pmatrix}
e^{i\theta}T_{11}+e^{-i\theta}T_{11}^{\sharp_{A}} & e^{i\theta}T_{12} &\cdots& e^{i\theta}T_{1d}\\
e^{-i\theta}T_{12}^{\sharp_{A}} &0 &\cdots& 0\\
\vdots & \vdots & \vdots & \vdots\\
e^{-i\theta}T_{1d}^{\sharp_{A}} & 0 &\cdots& 0\\
\end{pmatrix}\right]\\
&=\frac{1}{2}r_{\mathbb{A}}\left[
\begin{pmatrix}
T_{11}^{\sharp_{A}} & e^{i\theta}I &\cdots& 0\\
T_{12}^{\sharp_{A}} &0 &\cdots& 0\\
\vdots & \vdots & \vdots & \vdots\\
T_{1d}^{\sharp_{A}} & 0 &\cdots& 0\\
\end{pmatrix}
\begin{pmatrix}
e^{-i\theta}I &0 &\cdots& 0\\
T_{11} & T_{12} &\cdots& T_{1d}\\
\vdots & \vdots & \vdots & \vdots\\
0 & 0 &\cdots& 0\\
\end{pmatrix}
\right].
\end{align*}
So, by using \eqref{rr1} together with \eqref{commut} we get
\begin{align*}
\|\Re_A(e^{i\theta}\mathbb{S})\|_{\mathbb{A}}
&=\frac{1}{2}r_{\mathbb{A}}\left[
\begin{pmatrix}
e^{-i\theta}I &0 &\cdots& 0\\
T_{11} & T_{12} &\cdots& T_{1d}\\
\vdots & \vdots & \vdots & \vdots\\
0 & 0 &\cdots& 0\\
\end{pmatrix}
\begin{pmatrix}
T_{11}^{\sharp_{A}} & e^{i\theta}I &\cdots& 0\\
T_{12}^{\sharp_{A}} &0 &\cdots& 0\\
\vdots & \vdots & \vdots & \vdots\\
T_{1d}^{\sharp_{A}} & 0 &\cdots& 0\\
\end{pmatrix}
\right]\\
&=\tfrac{1}{2}r_{\mathbb{A}}\left[
\begin{pmatrix}
e^{-i\theta}T_{11}^{\sharp_{A}} &I &0&\cdots& 0\\
\sum_{k=1}^d T_{1k}T_{1k}^{\sharp_{A}}& e^{i\theta}T_{11} &0&\cdots& 0\\
0 & 0 &0&\cdots& 0\\
\vdots & \vdots & \vdots & \vdots\\
0 & 0 &0&\cdots& 0\\
\end{pmatrix}
\right]\\
&\leq\frac{1}{2}r\left[
\begin{pmatrix}
\|T_{11}\|_A &1 &0&\cdots& 0\\
\left\|\sum_{k=1}^d T_{1k}T_{1k}^{\sharp_{A}}\right\|_A& \|T_{11}\|_A &0&\cdots& 0\\
0 & 0 &0&\cdots& 0\\
\vdots & \vdots & \vdots & \vdots\\
0 & 0 &0&\cdots& 0\\
\end{pmatrix}
\right]\; (\text{by Lemma }\ref{mjom}).
\end{align*}
Hence, we infer that
$$\|\Re_A(e^{i\theta}\mathbb{S})\|_{\mathbb{A}}\leq\frac{1}{2}\left(\|T_{11}\|_A+\sqrt{\left\|\sum^d_{j=1}T_{1j}T_{1j}^{\sharp_A}\right\|_A}\right).$$
So, by taking the supremum over all $\theta\in \mathbb{R}$ in the above inequality and then using \eqref{zamnum} we get \eqref{first01} as desired. Now, for $k\in\{2,\cdots,d\}$, we let
\[
\mathbb{U}_k = \left(\begin{array}{c|c}
    \mathbb{J}_{k\times k}       & 0_{k\times (d-k)}     \\
        \hline
    0_{(d-k)\times k}   & \mathbb{I}_{(d-k)\times (d-k)}
      \end{array}\right),
\]
where $\mathbb{J}_{k\times k}$ and $\mathbb{I}_{(d-k)\times (n-k)}$ are $k\times k$ and $(d-k)\times (d-k)$ operator matrices respectively and are defined by
$$
\mathbb{J}_{k\times k}=\begin{pmatrix}
0       &\cdots  &0 &I\\
\vdots &  \iddots     &I  &0\\
0  &I\smash{\makebox[0pt][l]{\;\raisebox{0.8em}{$\iddots$}}}  &  \iddots      &\vdots\\
I &0  &\cdots &0
\end{pmatrix}\;\text{ and }\;\mathbb{I}_{(d-k)\times (d-k)}=\begin{pmatrix}
I       &0 &\cdots  &0\\
0 & I \smash{\makebox[0pt][l]{\;\raisebox{-0.8em}{$\ddots$}}}     &\ddots  &\vdots\\
\vdots  &\ddots  &I       &0\\
0 &\cdots  &0 &I
\end{pmatrix}
$$
In view of Lemma \ref{ir2020}, we have $\mathbb{U}_k\in \mathcal{B}_{\mathbb{A}}(\mathcal{H}\oplus \mathcal{H})$ for all $k$. Moreover, a short calculation shows that $\mathbb{U}_k^{\sharp_{\mathbb{A}}}=\mathbb{P}\mathbb{U}_k$ where $\mathbb{P}=\begin{pmatrix}
P_{\overline{\mathcal{R}(A)}}       &0 &\ldots  &0\\
0 & P_{\overline{\mathcal{R}(A)}}     &\ddots  &\vdots\\
\vdots  &\ddots  &P_{\overline{\mathcal{R}(A)}}      &0\\
0 &\ldots  &0 &P_{\overline{\mathcal{R}(A)}}
\end{pmatrix}$. So, it is not difficult to verify that $\mathbb{U}_k$ is $\mathbb{A}$-unitary operator for all $k$.  Moreover, one can check that
\begin{align*}
\omega_{\mathbb{A}}(\mathbb{T})
& \leq\omega_{\mathbb{A}}\left[\begin{pmatrix}
T_{11} & T_{12} &\cdots& T_{1d}\\
0 &0 &\cdots& 0\\
\vdots & \vdots & \vdots & \vdots\\
0 & 0 &\cdots& 0\\
\end{pmatrix}\right]+\omega_{\mathbb{A}}\left[\begin{pmatrix}
0 &0 &\cdots& 0\\
T_{21} & T_{22} &\cdots& T_{2d}\\
0 &0 &\cdots& 0\\
\vdots & \vdots & \vdots & \vdots\\
0 & 0 &\cdots& 0\\
\end{pmatrix}\right] \\
&\;\;\;+\ldots\ldots+\omega_{\mathbb{A}}\left[\begin{pmatrix}
0 &0 &\cdots& 0\\
\vdots & \vdots & \vdots & \vdots\\
0 & 0 &\cdots& 0\\
T_{d1} & T_{d2} &\cdots& T_{dd}\\
\end{pmatrix}\right] \\
&=\omega_{\mathbb{A}}\left[\begin{pmatrix}
T_{11} & T_{12} &\cdots& T_{1d}\\
0 &0 &\cdots& 0\\
\vdots & \vdots & \vdots & \vdots\\
0 & 0 &\cdots& 0\\
\end{pmatrix}\right]+\omega_{\mathbb{A}}\left[\mathbb{U}_2^{\sharp_{\mathbb{A}}}\begin{pmatrix}
T_{22} & T_{21} &\cdots& T_{2d}\\
0 &0 &\cdots& 0\\
\vdots & \vdots & \vdots & \vdots\\
0 & 0 &\cdots& 0\\
\end{pmatrix}\mathbb{U}\right] \\
&\;\;\;+\ldots\ldots+\omega_{\mathbb{A}}\left[\mathbb{U}_d^{\sharp_{\mathbb{A}}}\begin{pmatrix}
T_{dd} & T_{dd-1} &\cdots& T_{d1}\\
0 &0 &\cdots& 0\\
\vdots & \vdots & \vdots & \vdots\\
0 & 0 &\cdots& 0\\
\end{pmatrix}\mathbb{U}\right] \\
\end{align*}
So, by using Lemma \ref{weak} together with \eqref{first01}, we obtain
\begin{align*}
\omega_{\mathbb{A}}(\mathbb{T})
& \leq\omega_{\mathbb{A}}\left[\begin{pmatrix}
T_{11} & T_{12} &\cdots& T_{1d}\\
0 &0 &\cdots& 0\\
\vdots & \vdots & \vdots & \vdots\\
0 & 0 &\cdots& 0\\
\end{pmatrix}\right]+\omega_{\mathbb{A}}\left[\begin{pmatrix}
T_{22} & T_{21} &\cdots& T_{2d}\\
0 &0 &\cdots& 0\\
\vdots & \vdots & \vdots & \vdots\\
0 & 0 &\cdots& 0\\
\end{pmatrix}\right] \\
&\;\;\;+\ldots\ldots+\omega_{\mathbb{A}}\left[\begin{pmatrix}
T_{dd} & T_{dd-1} &\cdots& T_{d1}\\
0 &0 &\cdots& 0\\
\vdots & \vdots & \vdots & \vdots\\
0 & 0 &\cdots& 0\\
\end{pmatrix}\right] \\
&\leq\frac{1}{2}\left(\|T_{11}\|_A+\sqrt{\left\|\sum^d_{j=1}T_{1j}T_{1j}^{\sharp_A}\right\|_A}\right)+\frac{1}{2}\left(\|T_{22}\|_A+\sqrt{\left\|\sum^d_{j=1,j\neq 2}T_{2j}T_{2j}^{\sharp_A}\right\|_A}\right)\\
&\;\;\;+\ldots\ldots+\frac{1}{2}\left(\|T_{dd}\|_A+\sqrt{\left\|\sum^{d-1}_{j=1}T_{dj}T_{dj}^{\sharp_A}\right\|_A}\right).
\end{align*}
This finishes the proof of the theorem.
\end{proof}

To establish our next result, we shall require the following lemma.
\begin{lemma}\label{maxma}
Let $\mathbb{T}=\begin{pmatrix}
T_1 & 0 & 0\\
0 & \ddots & 0 \\
0 & 0 & T_d
\end{pmatrix}$ and $\mathbb{S}=\left( {\begin{array}{*{20}{c}}
   {0} & {} & {} & {{T_1}}  \\
    & {} & {{T_2}} & {}  \\
   {} &  {\mathinner{\mkern2mu\raise1pt\hbox{.}\mkern2mu
 \raise4pt\hbox{.}\mkern2mu\raise7pt\hbox{.}\mkern1mu}}  & {} & {}  \\
   {{T_d}} & {} &  & {0}  \\
\end{array}} \right)$be such that $T_i\in \mathcal{B}_{A^{1/2}}(\mathcal{H})$ for all $i\in\{1,\cdots,d\}$. Then, the following assertions hold
\begin{itemize}
  \item [(a)] $\|\mathbb{T}\|_\mathbb{A}=\max_{i\in\{1,\cdots,d\}}\|T_i\|_A$.
  \item [(b)] $\omega_{\mathbb{A}}(\mathbb{T})=\max_{i\in\{1,\cdots,d\}}\omega_{A}(T_i)$.
    \item [(c)] $\|\mathbb{S}\|_\mathbb{A}=\max_{i\in\{1,\cdots,d\}}\|T_i\|_A$.
\end{itemize}
\end{lemma}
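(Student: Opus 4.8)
The plan is to exploit the fact that the semi-inner product on $\mathbb{H}$ decouples across coordinates, so that each of (a), (b), (c) reduces to a coordinatewise estimate combined with a single-coordinate test vector that makes the estimate sharp. Throughout I write $x=(x_1,\dots,x_d)\in\mathbb{H}$ and recall that $\|x\|_\mathbb{A}^2=\sum_{k=1}^d\|x_k\|_A^2$, and that $\overline{\mathcal{R}(\mathbb{A})}=\oplus_{k=1}^d\overline{\mathcal{R}(A)}$ because $\mathbb{A}$ is diagonal with each diagonal block equal to $A$.

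For (a), I would first observe that $\mathbb{T}x=(T_1x_1,\dots,T_dx_d)$, so $\|\mathbb{T}x\|_\mathbb{A}^2=\sum_{k=1}^d\|T_kx_k\|_A^2$. Estimating each summand by $\|T_kx_k\|_A^2\le\|T_k\|_A^2\|x_k\|_A^2\le(\max_i\|T_i\|_A)^2\|x_k\|_A^2$ and summing gives $\|\mathbb{T}x\|_\mathbb{A}\le(\max_i\|T_i\|_A)\|x\|_\mathbb{A}$, whence $\|\mathbb{T}\|_\mathbb{A}\le\max_i\|T_i\|_A$ by \eqref{semiiineq}. For the reverse inequality, let $i_0$ attain the maximum; for any $\varepsilon>0$ choose $v\in\overline{\mathcal{R}(A)}$, $v\neq0$, with $\|T_{i_0}v\|_A/\|v\|_A>\|T_{i_0}\|_A-\varepsilon$, and take $x$ to be the vector whose $i_0$-th coordinate is $v$ and whose remaining coordinates vanish. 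Then $x\in\overline{\mathcal{R}(\mathbb{A})}$ and $\|\mathbb{T}x\|_\mathbb{A}/\|x\|_\mathbb{A}=\|T_{i_0}v\|_A/\|v\|_A$, so $\|\mathbb{T}\|_\mathbb{A}\ge\|T_{i_0}\|_A-\varepsilon$; letting $\varepsilon\to0$ settles (a).

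For (b), the same one-coordinate idea applies with the numerical radius. Since $\langle\mathbb{T}x,x\rangle_\mathbb{A}=\sum_{k=1}^d\langle T_kx_k\mid x_k\rangle_A$ and $|\langle T_kx_k\mid x_k\rangle_A|\le\omega_A(T_k)\|x_k\|_A^2$ for every $x_k$ (this is just the homogeneous form of the definition of $\omega_A$; it also holds when $\|x_k\|_A=0$, since then $Ax_k=0$ forces $\langle T_kx_k\mid x_k\rangle_A=0$), I would bound $|\langle\mathbb{T}x,x\rangle_\mathbb{A}|\le(\max_i\omega_A(T_i))\|x\|_\mathbb{A}^2$, giving $\omega_\mathbb{A}(\mathbb{T})\le\max_i\omega_A(T_i)$. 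The reverse inequality follows by testing against a vector supported in the $i_0$-th coordinate, where $i_0$ attains $\max_i\omega_A(T_i)$: for such an $x$ one has $\langle\mathbb{T}x,x\rangle_\mathbb{A}=\langle T_{i_0}x_{i_0}\mid x_{i_0}\rangle_A$ and $\|x\|_\mathbb{A}=\|x_{i_0}\|_A$, and taking the supremum over the $i_0$-th coordinate recovers $\omega_A(T_{i_0})$.

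For (c), I would note that $\mathbb{S}$ sends $x$ to the vector whose $i$-th coordinate is $T_ix_{d+1-i}$, so $\|\mathbb{S}x\|_\mathbb{A}^2=\sum_{i=1}^d\|T_ix_{d+1-i}\|_A^2$. The only new ingredient is that $i\mapsto d+1-i$ is a permutation of $\{1,\dots,d\}$, so $\sum_{i=1}^d\|x_{d+1-i}\|_A^2=\sum_{i=1}^d\|x_i\|_A^2=\|x\|_\mathbb{A}^2$; with this, the upper bound $\|\mathbb{S}x\|_\mathbb{A}\le(\max_i\|T_i\|_A)\|x\|_\mathbb{A}$ follows exactly as in (a). For sharpness, if $i_0$ attains the maximum I would place the near-extremal vector $v$ for $T_{i_0}$ in coordinate $d+1-i_0$ of $x$ and zero elsewhere, so that $\mathbb{S}x$ has only its $i_0$-th coordinate nonzero, equal to $T_{i_0}v$, yielding $\|\mathbb{S}x\|_\mathbb{A}/\|x\|_\mathbb{A}=\|T_{i_0}v\|_A/\|v\|_A$ and hence $\|\mathbb{S}\|_\mathbb{A}\ge\max_i\|T_i\|_A$. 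None of the three parts presents a genuine obstacle; the only points requiring care are keeping the test vectors inside $\overline{\mathcal{R}(\mathbb{A})}$, so that the supremum in \eqref{semiiineq} is legitimately approximated, and correctly tracking the anti-diagonal permutation in (c).
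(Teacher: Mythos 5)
Your proof is correct and follows essentially the same route as the paper: a coordinatewise estimate giving the upper bound, followed by test vectors supported in a single coordinate (respectively in coordinate $d+1-i_0$ for the anti-diagonal case) to obtain sharpness. The only cosmetic difference is in (a), where the paper bounds $|\langle \mathbb{T}x,y\rangle_{\mathbb{A}}|$ via \eqref{fg} together with the arithmetic--geometric mean inequality, whereas you compute $\|\mathbb{T}x\|_{\mathbb{A}}$ directly (exactly as the paper itself does for (c)); both yield the same conclusion.
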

\begin{proof}
\noindent (a)\;Let $x=(x_1,\cdots,x_d),y=(y_1,\cdots,y_d)\in \mathbb{H}$. By using the Cauchy-Schwarz inequality and the arithmetic-geometric mean inequality, we obtain
\begin{align}\label{k0}
	|\langle \mathbb{T}x,y \rangle_\mathbb{A}|\nonumber
&\leq \sum_{k=1}^d|\langle T_kx_k\mid y_k \rangle_A|\nonumber \\
	&\leq \sum_{k=1}^d\|T_k\|_A\|x_k\|_A\|y_k\|_A \nonumber\\
	&\leq\left(\max_{i\in\{1,\cdots,d\}}\|T_i\|_A\right)\times\tfrac{1}{2}\sum_{k=1}^d\left(\|x_k\|_A^2+\|y_k\|_A^2\right)\nonumber\\
&=\frac{\|x\|_\mathbb{A}^2+\|y\|_\mathbb{A}^2}{2}\left(\max_{i\in\{1,\cdots,d\}}\|T_i\|_A\right).
\end{align}
By taking the supremum over all $x\in \mathbb{H}$ with $\|x\|_\mathbb{A} =1$ in the inequality \eqref{k0} and then using \eqref{fg}, we get
$$\|\mathbb{T}\|_\mathbb{A}\leq\max_{i\in\{1,\cdots,d\}}\|T_i\|_A.$$
Let $u=(x,0,\cdots,0)\in \mathbb{H}$ and $v=(y,0,\cdots,0)\in \mathbb{H}$ be such that $\|x\|_A=\|y\|_A=1.$ Then $\|u\|_\mathbb{A}=\|v\|_\mathbb{A}=\|x\|_A=\|y\|_A=1.$ Therefore, in view of \eqref{fg}, we have
\begin{align*}
\|\mathbb{T}\|_\mathbb{A}
&\geq |\langle \mathbb{T}u,v\rangle_\mathbb{A}|= |\langle T_1x\mid y\rangle_A|.
\end{align*}
This implies that $\|\mathbb{T}\|_\mathbb{A}\geq \|T_1\|_A$. Similarly, we can show that $\|\mathbb{T}\|_\mathbb{A}\geq \|T_k\|_A$ for all $k\in\{2,3,\ldots,d\}$. This proves the desired equality.
\par \vskip 0.1 cm \noindent (b)\;Follows by using similar arguments as in $(a)$.
\par \vskip 0.1 cm \noindent (c)\;$x=(x_1,\cdots,x_d)\in \mathbb{H}$. By using \eqref{semiiineq}, it can be observed that
\begin{align*}
\left\|\mathbb{S}x\right\|_{\mathbb{A}}^2
 &=\|T_1x_d\|_A^2+\|T_2x_{d-1}\|_A^2+\cdots+\|T_dx_1\|_A^2\\
  &\leq \left(\max_{i\in\{1,\cdots,d\}}\|T_i\|_A^2\right) \sum_{k=1}^d\|x_k\|_A^2.
\end{align*}
This yields that
$$\left\|\mathbb{S}\right\|_{\mathbb{A}}\leq \max_{i\in\{1,\cdots,d\}}\|T_i\|_A.$$
Let $x_1\in  \mathcal{H}$ be such that $\|x\|_A =1$ and $u=(0,0,\cdots,x_1)$. Clearly, $\|u\|_\mathbb{A} =1.$ So, we obtain
$$\left\|\mathbb{S}\right\|_{\mathbb{A}}\geq\left\|\mathbb{S}u\right\|_{\mathbb{A}}=\|T_1x_1\|_A.$$
Thus, by taking the supremum over all $x_1\in \mathcal{H}$ with $\|x_1\|_A=1$, we obtain $\left\|\mathbb{S}\right\|_{\mathbb{A}}\geq\|T_1\|_A.$ Similarly, it is not difficult to prove that $\|\mathbb{S}\|_\mathbb{A}\geq \|T_i\|_A$ for all $i\in\{2,3,\ldots,d\}$. This proves the desired equality.
\end{proof}
Our next result is stated as follows.
\begin{theorem}
Let $\mathbb{T}=(T_{ij})$ be a ${d\times d}$ operator matrix where $T_{ij}\in \mathcal{B}_{A}(\mathcal{H})$. Then,
\begin{equation}\label{r2}
\omega_{\mathbb{A}}(\mathbb{T})\leq \frac{1}{2}\sum_{i=1}^d\omega_A(T_{ii})+\frac{1}{4}\left(d+\sum^d_{i,j=1}\|T_{ij}\|_A^2\right).
\end{equation}
\end{theorem}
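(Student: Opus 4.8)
The plan is to decompose $\mathbb{T}$ into its diagonal part and off-diagonal part, bound each separately, and combine via subadditivity of $\omega_{\mathbb{A}}$. Write $\mathbb{T}=\mathbb{D}+\mathbb{R}$, where $\mathbb{D}$ is the diagonal operator matrix $\mathrm{diag}(T_{11},\ldots,T_{dd})$ and $\mathbb{R}=(R_{ij})$ has $R_{ij}=T_{ij}$ for $i\neq j$ and $R_{ii}=0$. Since $\omega_{\mathbb{A}}$ is a seminorm on $\mathcal{B}_{\mathbb{A}^{1/2}}(\mathbb{H})$, we have $\omega_{\mathbb{A}}(\mathbb{T})\leq\omega_{\mathbb{A}}(\mathbb{D})+\omega_{\mathbb{A}}(\mathbb{R})$. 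For the diagonal term, \cref{maxma}(b) gives $\omega_{\mathbb{A}}(\mathbb{D})=\max_{i}\omega_A(T_{ii})\leq\sum_{i=1}^d\omega_A(T_{ii})$; this already overshoots the target coefficient $\tfrac12$ on the diagonal terms, so I anticipate the real accounting must be sharper.

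First I would reexamine the intended target: the coefficient $\tfrac12$ on $\sum_i\omega_A(T_{ii})$ suggests that the correct route is to apply the general elementary bound $\omega_{\mathbb{A}}(\mathbb{T})\leq\tfrac12\|\mathbb{T}\|_{\mathbb{A}}+\tfrac12\|\mathbb{T}^2\|_{\mathbb{A}}^{1/2}$ only after first splitting off the diagonal contribution, or alternatively to use the Cauchy--Schwarz--based estimate underlying \cref{maxma}(a). Concretely, I would start from the defining supremum: for a unit vector $x=(x_1,\ldots,x_d)$ with $\|x\|_{\mathbb{A}}=1$, expand
\begin{equation*}
\langle\mathbb{T}x,x\rangle_{\mathbb{A}}=\sum_{i=1}^d\langle T_{ii}x_i\mid x_i\rangle_A+\sum_{i\neq j}\langle T_{ij}x_j\mid x_i\rangle_A.
\end{equation*}
The diagonal sum is bounded in modulus by $\sum_i\omega_A(T_{ii})\|x_i\|_A^2$, and since $\sum_i\|x_i\|_A^2=1$ this contributes at most $\max_i\omega_A(T_{ii})$. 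To produce the factor $\tfrac12$ and the quadratic term, I would bound each off-diagonal (and appropriately re-weighted diagonal) cross term $|\langle T_{ij}x_j\mid x_i\rangle_A|\leq\|T_{ij}\|_A\|x_j\|_A\|x_i\|_A$ and then invoke the arithmetic--geometric mean inequality $\|x_i\|_A\|x_j\|_A\leq\tfrac12(\|x_i\|_A^2+\|x_j\|_A^2)$, exactly as in the proof of \cref{maxma}(a). Summing and using $\sum_i\|x_i\|_A^2=1$ should convert $\sum_{i,j}\|T_{ij}\|_A\|x_i\|_A\|x_j\|_A$ into a bound involving $\tfrac14\sum_{i,j}(\|T_{ij}\|_A^2+1)=\tfrac14(d^{?}+\sum_{i,j}\|T_{ij}\|_A^2)$ after an additional AM--GM step $\|T_{ij}\|_A\leq\tfrac12(\|T_{ij}\|_A^2+1)$.

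The main obstacle will be bookkeeping the constants so that precisely the coefficients $\tfrac12$, $\tfrac14$, and the constant $d$ emerge, rather than a weaker constant. In particular I expect the key device is to apply the scalar inequality $ts\leq\tfrac12(t^2+s^2)$ in \emph{two} stages: once on the products $\|x_i\|_A\|x_j\|_A$ of vector seminorms, and once on the operator seminorms $\|T_{ij}\|_A$ paired against $1$. Tracking which index pairs carry the $\omega_A(T_{ii})$ term versus the $\|T_{ii}\|_A^2$ term will require care, since the diagonal entries appear to be split between the two sums on the right-hand side of \eqref{r2}. I would verify the constant $d$ arises as $\sum_{i=1}^d 1$ from the diagonal applications of $\|T_{ij}\|_A\leq\tfrac12(\|T_{ij}\|_A^2+1)$ summed over all $d^2$ index pairs, reconciling it with the stated $\tfrac14\big(d+\sum_{i,j}\|T_{ij}\|_A^2\big)$; if the natural count yields $d^2$ instead of $d$, I would revisit whether the diagonal terms are instead absorbed entirely into the $\tfrac12\sum_i\omega_A(T_{ii})$ term and only the $d^2-d$ off-diagonal pairs (plus a correction) feed the second sum. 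Resolving this constant-matching is where I expect to spend most of the effort.
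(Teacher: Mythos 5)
Your proposal does not close, and the difficulty you flag at the end is not mere bookkeeping: the approach itself cannot deliver inequality \eqref{r2}. Expanding $\langle\mathbb{T}x,x\rangle_{\mathbb{A}}$ directly bounds the diagonal contribution by $\max_i\omega_A(T_{ii})$, and this is \emph{incomparable} with the target coefficient $\tfrac12\sum_i\omega_A(T_{ii})$ (take all but one $T_{ii}$ equal to $0$). One can try to repair this via $\omega_A(T_{kk})\le\tfrac12\omega_A(T_{kk})+\tfrac14(1+\|T_{kk}\|_A^2)$ for the maximizing index, but the additive constant this consumes is then unavailable for the off-diagonal cross terms, and those fail without it: already for $d=2$ with $\|T_{12}\|_A=\|T_{21}\|_A=t$ and $\|x_1\|_A=\|x_2\|_A=1/\sqrt2$ the cross terms contribute $t$, which is not dominated by $\tfrac14\sum_{i\ne j}\|T_{ij}\|_A^2=\tfrac12 t^2$ unless $t\ge2$. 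Concretely, for $d=2$, $T_{11}=I$, $T_{22}=0$, $T_{12}=T_{21}=I$, the bound your expansion would produce, $\max_i\omega_A(T_{ii})+\sup_x\sum_{i\neq j}\|T_{ij}\|_A\|x_i\|_A\|x_j\|_A=2$, exceeds the right-hand side $\tfrac74$ of \eqref{r2}, so even a correctly completed version of your estimate cannot imply the theorem. Your guess that the constant $d$ comes from $d^2$ applications of $\|T_{ij}\|_A\le\tfrac12(\|T_{ij}\|_A^2+1)$ is also not how it arises.

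The paper's proof uses a different mechanism, namely the one already built for Theorem~\ref{thf1}: split $\mathbb{T}$ into its $d$ rows, use the $\mathbb{A}$-unitary permutations $\mathbb{U}_k$ and Lemma~\ref{weak} to reduce each row to the first-row case, and add the resulting bounds. For a single-row matrix $\mathbb{S}$ one never expands the quadratic form; instead one writes $\omega_{\mathbb{A}}(\mathbb{S})=\sup_\theta\|\Re_{\mathbb{A}}(e^{i\theta}\mathbb{S})\|_{\mathbb{A}}$ by \eqref{zamnum}, passes to the spectral radius via \eqref{aself1}, factors the matrix and applies the commutativity property \eqref{commut} to obtain a matrix supported on two columns, and then splits that matrix into three pieces: a diagonal piece contributing $\tfrac12\omega_A(T_{11})$ (Lemma~\ref{maxma}), a piece whose single nonzero entry is $I$ contributing $\tfrac12\cdot\tfrac12\|I\|_A=\tfrac14$, and a piece whose single nonzero entry is $\sum_k T_{1k}T_{1k}^{\sharp_A}$ contributing $\tfrac14\big\|\sum_k T_{1k}T_{1k}^{\sharp_A}\big\|_A$. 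The last two use the identity \eqref{at2}, $\omega_A(T)=\tfrac12\|T\|_A$ when $AT^2=0$; this is the device that produces every factor $\tfrac14$, and summing over the $d$ rows produces exactly $d$ copies of the constant $\tfrac14$, which is the true provenance of the term $\tfrac{d}{4}$. The final form of \eqref{r2} then follows from $\big\|\sum_j T_{ij}T_{ij}^{\sharp_A}\big\|_A\le\sum_j\|T_{ij}\|_A^2$, a consequence of \eqref{diez}. None of these ingredients appear in your Cauchy--Schwarz expansion, so the missing idea is the row decomposition combined with the $r_{\mathbb{A}}$-commutation trick and \eqref{at2}.
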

\begin{proof}
We first prove that
\begin{equation}\label{first02}
\omega_{\mathbb{A}}(\mathbb{S})\leq \frac{\omega_A(T_{11})}{2}+\frac{1}{4}+\frac{1}{4}\left\|\sum^d_{j=1}T_{1j}T_{1j}^{\sharp_A}\right\|_A,
\end{equation}
where $\mathbb{S}=\begin{pmatrix}
T_{11} & T_{12} &\cdots& T_{1d}\\
0 &0 &\cdots& 0\\
\vdots & \vdots & \vdots & \vdots\\
0 & 0 &\cdots& 0\\
\end{pmatrix}$. Let $\theta\in \mathbb{R}$. By proceeding as in the proof of Theorem \ref{thf1} we get
\begin{align*}
\|\Re_A(e^{i\theta}\mathbb{S})\|_{\mathbb{A}}
 &=\frac{1}{2}r_{\mathbb{A}}\left[\begin{pmatrix}
e^{i\theta}T_{11}+e^{-i\theta}T_{11}^{\sharp_{A}} & e^{i\theta}T_{12} &\cdots& e^{i\theta}T_{1d}\\
e^{-i\theta}T_{12}^{\sharp_{A}} &0 &\cdots& 0\\
\vdots & \vdots & \vdots & \vdots\\
e^{-i\theta}T_{1d}^{\sharp_{A}} & 0 &\cdots& 0\\
\end{pmatrix}\right]\\
&=\frac{1}{2}r_{\mathbb{A}}\left[
\begin{pmatrix}
T_{11}^{\sharp_{A}} & 0 &\cdots& e^{i\theta}I\\
T_{12}^{\sharp_{A}} &0 &\cdots& 0\\
\vdots & \vdots & \vdots & \vdots\\
T_{1d}^{\sharp_{A}} & 0 &\cdots& 0\\
\end{pmatrix}
\begin{pmatrix}
e^{-i\theta}I &0 &\cdots& 0\\
0 & 0 &\cdots& 0\\
\vdots & \vdots & \vdots & \vdots\\
T_{11} & T_{12} &\cdots& T_{1d}\\
\end{pmatrix}
\right].
\end{align*}
So, by using \eqref{commut} and \eqref{dom} we get
\begin{align*}
\|\Re_A(e^{i\theta}\mathbb{S})\|_{\mathbb{A}}
&=\frac{1}{2}r_{\mathbb{A}}\left[
\begin{pmatrix}
e^{-i\theta}I &0 &\cdots& 0\\
0 & 0 &\cdots& 0\\
\vdots & \vdots & \vdots & \vdots\\
T_{11} & T_{12} &\cdots& T_{1d}\\
\end{pmatrix}
\begin{pmatrix}
T_{11}^{\sharp_{A}} & 0 &\cdots& e^{i\theta}I\\
T_{12}^{\sharp_{A}} &0 &\cdots& 0\\
\vdots & \vdots & \vdots & \vdots\\
T_{1d}^{\sharp_{A}} & 0 &\cdots& 0\\
\end{pmatrix}
\right]\\
&=\tfrac{1}{2}r_{\mathbb{A}}\left[
\begin{pmatrix}
e^{-i\theta}T_{11}^{\sharp_{A}} & 0&\cdots&0& I\\
0 & 0 &\cdots&0& 0\\
\vdots & \vdots & \vdots & \vdots& \vdots\\
\sum_{k=1}^d T_{1k}T_{1k}^{\sharp_{A}}& 0 &\cdots&0& e^{i\theta}T_{11}\\
\end{pmatrix}
\right]\\
&\leq\tfrac{1}{2}\omega_{\mathbb{A}}\left[
\begin{pmatrix}
e^{-i\theta}T_{11}^{\sharp_{A}} & 0&\cdots&0& I\\
0 & 0 &\cdots&0& 0\\
\vdots & \vdots & \vdots & \vdots& \vdots\\
\sum_{k=1}^d T_{1k}T_{1k}^{\sharp_{A}}& 0 &\cdots&0& e^{i\theta}T_{11}\\
\end{pmatrix}
\right]\\
&\leq\tfrac{1}{2}\omega_{\mathbb{A}}\left[
\begin{pmatrix}
e^{-i\theta}T_{11}^{\sharp_{A}} & 0&\cdots&0& 0\\
0 & 0 &\cdots&0& 0\\
\vdots & \vdots & \vdots & \vdots& \vdots\\
0& 0 &\cdots&0& e^{i\theta}T_{11}\\
\end{pmatrix}
\right]+\tfrac{1}{2}\omega_{\mathbb{A}}\left[
\begin{pmatrix}
 0&\cdots&0& I\\
 0 &\cdots&0& 0\\
 \vdots & \vdots & \vdots& \vdots\\
 0 &\cdots&0& 0\\
\end{pmatrix}
\right]\\
&+   \tfrac{1}{2}\omega_{\mathbb{A}}\left[
\begin{pmatrix}
0 & 0&\cdots&0\\
0 & 0 &\cdots&0\\
\vdots & \vdots & \vdots & \vdots\\
\sum_{k=1}^d T_{1k}T_{1k}^{\sharp_{A}}& 0 &\cdots&0\\
\end{pmatrix}
\right].
\end{align*}
So, by using Lemma \ref{maxma} together with \eqref{at2} we obtain
\begin{equation}
\|\Re_A(e^{i\theta}\mathbb{S})\|_{\mathbb{A}}\leq \frac{\omega_A(T_{11})}{2}+\frac{1}{4}+\frac{1}{4}\left\|\sum^d_{j=1}T_{1j}T_{1j}^{\sharp_A}\right\|_A,
\end{equation}
So, by taking the supremum over all $\theta\in \mathbb{R}$ in the above inequality and then using \eqref{zamnum} we get \eqref{first02}. Finally, by using an argument similar to that used in proof of Theorem \ref{thf1} we reach the desired equality \eqref{r2}.
\end{proof}

In order to prove our next result in this section, we need the following lemmas.
\begin{lemma}(\cite{feki03})\label{lemmajdid}
Let $\mathbb{T}= (T_{ij})_{d \times d}$ be such that $T_{ij}\in \mathcal{B}_{A^{1/2}}(\mathcal{H})$ for all $i,j$. Then, $\mathbb{T}\in\mathcal{B}_{\mathbb{A}^{1/2}}(\mathbb{H})$. Moreover, we have
\begin{equation}\label{tag0}
\|\mathbb{T}\|_{\mathbb{A}}\leq \|\widehat{\mathbb{T}}^{\mathbb{A}}\|,
\end{equation}
where $\widehat{\mathbb{T}}^{\mathbb{A}}=(\|T_{ij} \|_A)_{d\times d}\in \mathbb{M}_d(\mathbb{C})$.
\end{lemma}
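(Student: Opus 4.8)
The plan is to estimate $\|\mathbb{T}x\|_{\mathbb{A}}$ directly for an arbitrary $x=(x_1,\dots,x_d)\in\mathbb{H}$ and to compare the action of $\mathbb{T}$ on $x$ with the action of the scalar matrix $\widehat{\mathbb{T}}^{\mathbb{A}}=(\|T_{ij}\|_A)$ on the comparison vector $\xi=(\|x_1\|_A,\dots,\|x_d\|_A)\in\mathbb{C}^d$. First I would use the block structure of $\mathbb{T}$ together with the definition of the $\mathbb{A}$-seminorm on $\mathbb{H}$ to write
\begin{equation*}
\|\mathbb{T}x\|_{\mathbb{A}}^2=\sum_{i=1}^d\Big\|\sum_{j=1}^d T_{ij}x_j\Big\|_A^2 .
\end{equation*}
Since each $T_{ij}$ is $A$-bounded, the estimate $\|T_{ij}x_j\|_A\le\|T_{ij}\|_A\,\|x_j\|_A$ coming from \eqref{semiiineq} and the triangle inequality for $\|\cdot\|_A$ give, for every $i$,
\begin{equation*}
\Big\|\sum_{j=1}^d T_{ij}x_j\Big\|_A\le \sum_{j=1}^d\|T_{ij}\|_A\,\|x_j\|_A=\big(\widehat{\mathbb{T}}^{\mathbb{A}}\xi\big)_i .
\end{equation*}

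Next I would exploit that all the entries of $\widehat{\mathbb{T}}^{\mathbb{A}}$ and of $\xi$ are nonnegative, so that $\widehat{\mathbb{T}}^{\mathbb{A}}\xi$ is a nonnegative vector dominating, componentwise, the nonnegative vector whose $i$-th entry is $\big\|\sum_j T_{ij}x_j\big\|_A$. By the monotonicity of the Euclidean norm on the nonnegative cone (squaring and summing preserve the inequality), this yields
\begin{equation*}
\|\mathbb{T}x\|_{\mathbb{A}}^2\le\big\|\widehat{\mathbb{T}}^{\mathbb{A}}\xi\big\|^2\le\big\|\widehat{\mathbb{T}}^{\mathbb{A}}\big\|^2\,\|\xi\|^2,
\end{equation*}
where $\|\cdot\|$ is the Euclidean norm on $\mathbb{C}^d$ and $\big\|\widehat{\mathbb{T}}^{\mathbb{A}}\big\|$ its induced operator (spectral) norm. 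Since $\|\xi\|^2=\sum_{j=1}^d\|x_j\|_A^2=\|x\|_{\mathbb{A}}^2$, I obtain the single key inequality
\begin{equation*}
\|\mathbb{T}x\|_{\mathbb{A}}\le\big\|\widehat{\mathbb{T}}^{\mathbb{A}}\big\|\,\|x\|_{\mathbb{A}}\qquad\text{for all }x\in\mathbb{H}.
\end{equation*}

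From this estimate both assertions follow at once. On the one hand, $\mathbb{T}$ is a bounded operator on $\mathbb{H}$, being a finite block matrix whose entries lie in $\mathcal{B}(\mathcal{H})$, and the displayed inequality shows it satisfies the condition in \eqref{abbbbbbbb} applied to $\mathbb{A}$, with constant $\lambda=\|\widehat{\mathbb{T}}^{\mathbb{A}}\|$; hence $\mathbb{T}\in\mathcal{B}_{\mathbb{A}^{1/2}}(\mathbb{H})$. On the other hand, taking the supremum over all $x$ with $\|x\|_{\mathbb{A}}=1$ and recalling the definition \eqref{semiiineq} of the $\mathbb{A}$-operator seminorm gives exactly $\|\mathbb{T}\|_{\mathbb{A}}\le\|\widehat{\mathbb{T}}^{\mathbb{A}}\|$, which is \eqref{tag0}.

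I do not expect a genuine obstacle: the argument is a direct computation. The one step that must be handled with care is the passage from the componentwise scalar bound to the Euclidean-norm bound, which relies on the nonnegativity of every quantity involved so that the comparison with $\widehat{\mathbb{T}}^{\mathbb{A}}\xi$ survives squaring and summation; once the comparison vector $\xi$ is introduced, the remainder is just the defining inequality for the spectral norm of $\widehat{\mathbb{T}}^{\mathbb{A}}$ together with the identity $\|\xi\|=\|x\|_{\mathbb{A}}$.
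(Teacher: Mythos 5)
Your argument is correct and complete: the reduction to the comparison vector $\xi=(\|x_1\|_A,\dots,\|x_d\|_A)$, the componentwise bound $\bigl\|\sum_j T_{ij}x_j\bigr\|_A\le(\widehat{\mathbb{T}}^{\mathbb{A}}\xi)_i$ (valid for every $x_j$, including $x_j\in\mathcal{N}(A)$, precisely because each $T_{ij}$ is $A$-bounded), and the monotonicity of the Euclidean norm on the nonnegative cone together yield $\|\mathbb{T}x\|_{\mathbb{A}}\le\|\widehat{\mathbb{T}}^{\mathbb{A}}\|\,\|x\|_{\mathbb{A}}$, which gives both membership in $\mathcal{B}_{\mathbb{A}^{1/2}}(\mathbb{H})$ via \eqref{abbbbbbbb} and the norm inequality \eqref{tag0}. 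The paper itself states this lemma without proof, citing \cite{feki03}, and your computation is essentially the standard argument used there, so there is nothing to add.
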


\begin{lemma}\label{jdidddd}
Let $T,S\in \mathcal{B}_{A}(\mathcal{H})$ and $\mathbb{B}=\begin{pmatrix}
A &0\\
0 &A
\end{pmatrix}$. Then,
\begin{equation*}
\omega_{\mathbb{B}}\left[\begin{pmatrix}
0&T\\
S &0
\end{pmatrix}\right]=\frac{1}{2}\sup_{\theta\in \mathbb{R}}\left\|e^{i\theta}T+e^{-i\theta}S^{\sharp_A}\right\|_A.
\end{equation*}
\end{lemma}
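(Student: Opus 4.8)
The plan is to reduce everything to the variational formula \eqref{zamnum} for the $\mathbb{B}$-numerical radius. Since $T,S\in\mathcal{B}_A(\mathcal{H})$, Lemma \ref{ir2020} (applied with $d=2$) guarantees that $\mathbb{T}:=\begin{pmatrix}0&T\\ S&0\end{pmatrix}$ belongs to $\mathcal{B}_{\mathbb{B}}(\mathbb{H})$ and that its $\mathbb{B}$-adjoint is $\mathbb{T}^{\sharp_{\mathbb{B}}}=\begin{pmatrix}0&S^{\sharp_A}\\ T^{\sharp_A}&0\end{pmatrix}$. Applying \eqref{zamnum} to $\mathbb{T}$, I would first record that $\omega_{\mathbb{B}}(\mathbb{T})=\sup_{\theta\in\mathbb{R}}\|\Re_{\mathbb{B}}(e^{i\theta}\mathbb{T})\|_{\mathbb{B}}$, and then compute, using the conjugate-linearity of the $\sharp_{\mathbb{B}}$-operation,
$$\Re_{\mathbb{B}}(e^{i\theta}\mathbb{T})=\tfrac12\begin{pmatrix}0& e^{i\theta}T+e^{-i\theta}S^{\sharp_A}\\ e^{i\theta}S+e^{-i\theta}T^{\sharp_A}&0\end{pmatrix}.$$

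Next I would evaluate the $\mathbb{B}$-seminorm of this anti-diagonal $2\times2$ matrix by invoking Lemma \ref{maxma}(c) with $d=2$, which yields $\|\Re_{\mathbb{B}}(e^{i\theta}\mathbb{T})\|_{\mathbb{B}}=\tfrac12\max\{f(\theta),g(\theta)\}$, where $f(\theta)=\|e^{i\theta}T+e^{-i\theta}S^{\sharp_A}\|_A$ and $g(\theta)=\|e^{i\theta}S+e^{-i\theta}T^{\sharp_A}\|_A$. The conclusion will follow once I show that the maximum can be dropped, i.e. that $f(\theta)=g(\theta)$ for every $\theta$; taking the supremum over $\theta$ and using \eqref{zamnum} then gives exactly the asserted identity.

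The heart of the argument, and the step I expect to be the main obstacle, is establishing $f(\theta)=g(\theta)$, because the naive move of passing to $A$-adjoints does not return $T$ but only $(T^{\sharp_A})^{\sharp_A}=P_{\overline{\mathcal{R}(A)}}TP_{\overline{\mathcal{R}(A)}}$. I would proceed as follows. Since $\|Y\|_A=\|Y^{\sharp_A}\|_A$ and $\sharp_A$ is conjugate-linear, $g(\theta)=\|e^{-i\theta}S^{\sharp_A}+e^{i\theta}(T^{\sharp_A})^{\sharp_A}\|_A=\|e^{i\theta}P_{\overline{\mathcal{R}(A)}}TP_{\overline{\mathcal{R}(A)}}+e^{-i\theta}S^{\sharp_A}\|_A$. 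Writing $P:=P_{\overline{\mathcal{R}(A)}}$, $W:=e^{i\theta}T+e^{-i\theta}S^{\sharp_A}$ and $Z:=e^{i\theta}PTP+e^{-i\theta}S^{\sharp_A}$, I would use the seminorm identities $\|X\|_A=\|XP\|_A=\|PXP\|_A$ for $X\in\mathcal{B}_A(\mathcal{H})$. These are immediate from \eqref{semiiineq}: the supremum there runs over $\overline{\mathcal{R}(A)}$, on which $Px=x$, and $P$ is an $A$-isometry (indeed $\|Px\|_A=\|x\|_A$ for all $x\in\mathcal{H}$, since $PAP=A$). Combined with $PS^{\sharp_A}=S^{\sharp_A}$, which holds because $\mathcal{R}(S^{\sharp_A})\subseteq\overline{\mathcal{R}(A)}$ for the Douglas solution (Theorem \ref{doug}), a direct check gives $ZP=PWP$, whence $f(\theta)=\|W\|_A=\|PWP\|_A=\|ZP\|_A=\|Z\|_A=g(\theta)$, completing the reduction. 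The remaining bookkeeping, namely membership in the relevant subalgebras and the conjugate-linearity of the adjoint operations, is routine.
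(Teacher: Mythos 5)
Your proof is correct. It shares its skeleton with the paper's argument --- both rest on the variational formula \eqref{zamnum} together with the anti-diagonal norm formula of Lemma \ref{maxma}(c) --- but the two proofs resolve the essential difficulty, namely the mismatch between the $(1,2)$ and $(2,1)$ entries of $\Re_{\mathbb{B}}(e^{i\theta}\mathbb{T})$, in genuinely different ways. The paper first replaces $\mathbb{T}$ by $\mathbb{T}^{\sharp_{\mathbb{B}}}$ (using $\omega_{\mathbb{B}}(\mathbb{X})=\omega_{\mathbb{B}}(\mathbb{X}^{\sharp_{\mathbb{B}}})$) so that the resulting anti-diagonal matrix has exactly the form $\begin{pmatrix} 0 & X\\ X^{\sharp_A} & 0\end{pmatrix}$ and its $\mathbb{B}$-seminorm is $\|X\|_A$ on the nose; this yields $\tfrac12\sup_{\theta}\|e^{i\theta}S+e^{-i\theta}T^{\sharp_A}\|_A$, i.e.\ the formula with $T$ and $S$ interchanged, which the paper then repairs by conjugating with the $\mathbb{B}$-unitary $\begin{pmatrix} 0 & I\\ I & 0\end{pmatrix}$ and invoking Lemma \ref{weak}. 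You instead apply \eqref{zamnum} to $\mathbb{T}$ itself, accept the $\tfrac12\max\{f(\theta),g(\theta)\}$ coming from Lemma \ref{maxma}(c), and eliminate the maximum by proving $f=g$ via the seminorm identities $\|X\|_A=\|XP\|_A=\|PXP\|_A$ and $PS^{\sharp_A}=S^{\sharp_A}$; this is precisely the spot where a careless use of $(T^{\sharp_A})^{\sharp_A}=T$ would be wrong, and your $ZP=PWP$ computation handles it correctly (the paper in fact needs the same projection identities, tacitly, when it passes from $\|e^{i\theta}S^{\sharp_A}+e^{-i\theta}(T^{\sharp_A})^{\sharp_A}\|_A$ to $\|e^{i\theta}T^{\sharp_A}+e^{-i\theta}S\|_A$). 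Your route avoids Lemma \ref{weak} and the swap unitary entirely and is the more self-contained of the two; the paper's route trades your explicit projection bookkeeping for the unitary-invariance machinery it has already set up.
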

\begin{proof}
Notice first that, in view of Lemma \ref{maxma} (c) we have
\begin{equation}\label{particular}
\left\|\begin{pmatrix}
0&X\\
X^{\sharp_A} &0
\end{pmatrix}\right\|_{\mathbb{B}}=\|X\|_A,
\end{equation}
for every $X\in \mathcal{B}_{A}(\mathcal{H})$. Now, by using Lemma \ref{ir2020} and \eqref{zamnum}, it follows that
\begin{align*}
\omega_{\mathbb{B}}\left[\begin{pmatrix}
0&T\\
S &0
\end{pmatrix}\right]
& =\omega_{\mathbb{B}}\left[\begin{pmatrix}
0&T\\
S &0
\end{pmatrix}^{\sharp_\mathbb{B}}\right]\\
 &=\frac{1}{2}\sup_{\theta\in \mathbb{R}}\left\|e^{i\theta}\begin{pmatrix}
0&S^{\sharp_A}\\
T^{\sharp_A} &0
\end{pmatrix}+e^{-i\theta}\begin{pmatrix}
0& (T^{\sharp_A})^{\sharp_A}\\
(S^{\sharp_A})^{\sharp_A} &0
\end{pmatrix}\right\|_\mathbb{B}\\
  &=\frac{1}{2}\sup_{\theta\in \mathbb{R}}\left\|\begin{pmatrix}
0&e^{i\theta}S^{\sharp_A}+e^{-i\theta}(T^{\sharp_A})^{\sharp_A}\\
e^{i\theta}T^{\sharp_A}+e^{-i\theta}(S^{\sharp_A})^{\sharp_A} &0
\end{pmatrix}\right\|_\mathbb{B}\\
  &=\frac{1}{2}\sup_{\theta\in \mathbb{R}}\left\|\begin{pmatrix}
0&e^{i\theta}S^{\sharp_A}+e^{-i\theta}(T^{\sharp_A})^{\sharp_A}\\
[e^{i\theta}S^{\sharp_A}+e^{-i\theta}(T^{\sharp_A})^{\sharp_A}]^{\sharp_A} &0
\end{pmatrix}\right\|_\mathbb{B}\\
  &=\frac{1}{2}\sup_{\theta\in \mathbb{R}}\left\|e^{i\theta}S^{\sharp_A}+e^{-i\theta}(T^{\sharp_A})^{\sharp_A}\right\|_A,\;(\text{by }\eqref{particular})\\
 &=\frac{1}{2}\sup_{\theta\in \mathbb{R}}\left\|e^{i\theta}T^{\sharp_A}+e^{-i\theta}S\right\|_A.
\end{align*}
So, by replacing $\theta$ by $-\theta$ in the above equality, we obtain
\begin{equation}\label{durr2}
\omega_{\mathbb{B}}\left[\begin{pmatrix}
0&T\\
S &0
\end{pmatrix}\right]=\frac{1}{2}\sup_{\theta\in \mathbb{R}}\left\|e^{-i\theta}T^{\sharp_A}+e^{i\theta}S\right\|_A.
\end{equation}
Let $\mathbb{U}=\begin{pmatrix}
0&I\\
I&0
\end{pmatrix}.$ In view of Lemma \ref{ir2020}, we have $\mathbb{U}\in \mathcal{B}_{\mathbb{B}}(\mathcal{H}\oplus \mathcal{H})$ and $\mathbb{U}^{\sharp_{\mathbb{B}}}=\begin{pmatrix}
0&P_{\overline{\mathcal{R}(A)}} \\
P_{\overline{\mathcal{R}(A)}}&0
\end{pmatrix}.$ So, we verify that $\|\mathbb{U}x\|_\mathbb{B}=\|\mathbb{U}^{\sharp_\mathbb{B}}x\|_\mathbb{B}=\|x\|_\mathbb{B}$ for all $x=(x_1,x_2)\in \mathcal{H}\oplus \mathcal{H}$. Hence, $\mathbb{U}$ is $\mathbb{B}$-unitary operator. Thus, by Lemma \ref{weak} we have
\begin{align}\label{offfd}
\omega_{\mathbb{B}}\left[\begin{pmatrix}
0&T\\
S &0
\end{pmatrix}\right]
& =\omega_\mathbb{B}\left[\mathbb{U}^{\sharp_A}\begin{pmatrix}
0&T\\
S &0
\end{pmatrix}\mathbb{U}\right]\nonumber\\
& =\omega_\mathbb{B}\left[\begin{pmatrix}
P_{\overline{\mathcal{R}(A)}}&0\\
0 &P_{\overline{\mathcal{R}(A)}}
\end{pmatrix}\begin{pmatrix}
0&S\\
T &0
\end{pmatrix}\right]\nonumber\\
&=\omega_{\mathbb{B}}\left[\begin{pmatrix}
0&S\\
T &0
\end{pmatrix}\right]
\end{align}
Hence, by combining \eqref{durr2} together with \eqref{offfd} we prove the desired equality.
\end{proof}

Now, we are in a position to establish the following result which generalizes \cite[Theorem 4.17.]{BP}.
\begin{theorem}\label{th-2}
Let $\mathbb{T}=(T_{ij})$ be an $d\times d$ operator matrix with $T_{ij}\in \mathcal{B}_A(\mathcal{H})$. Then,
\[\omega_\mathbb{A}(\mathbb{T})\leq \omega(S),\]
where $S=[s_{ij}]\in \mathbb{M}_d(\mathbb{C})$ is given by
\begin{equation}\label{sij}
s_{ij}=
\begin{cases}
\omega(T_{ij})&,\text{ if }\;i=j,\\
\omega_\mathbb{B}\left[\begin{pmatrix}
0 &T_{ij}\\
T_{ji}&0
\end{pmatrix}\right]\text{ with } \mathbb{B}=\begin{pmatrix}
A&0\\
0&A
\end{pmatrix}&,\text{ if }\;i\neq j.
\end{cases}.
\end{equation}
\end{theorem}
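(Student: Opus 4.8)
The plan is to estimate $|\langle \mathbb{T}x,x\rangle_\mathbb{A}|$ for an arbitrary vector $x=(x_1,\dots,x_d)\in\mathbb{H}$ with $\|x\|_\mathbb{A}=1$ and to compare it with the scalar numerical radius $\omega(S)$ evaluated at the nonnegative vector $\xi=(\|x_1\|_A,\dots,\|x_d\|_A)\in\mathbb{R}^d\subseteq\mathbb{C}^d$. Observe first that $\|\xi\|^2=\sum_{k=1}^d\|x_k\|_A^2=\|x\|_\mathbb{A}^2=1$. Expanding the semi-inner product entrywise gives $\langle\mathbb{T}x,x\rangle_\mathbb{A}=\sum_{i,j}\langle T_{ij}x_j\mid x_i\rangle_A$, which I would split into its diagonal part $\sum_i\langle T_{ii}x_i\mid x_i\rangle_A$ and the off-diagonal pairs $\sum_{i<j}\big(\langle T_{ij}x_j\mid x_i\rangle_A+\langle T_{ji}x_i\mid x_j\rangle_A\big)$. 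For the diagonal terms I would use the defining inequality of the $A$-numerical radius, namely $|\langle T_{ii}x_i\mid x_i\rangle_A|\le\omega_A(T_{ii})\|x_i\|_A^2=s_{ii}\,\xi_i^2$.

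The crux of the argument is the off-diagonal estimate, and the key step is to produce the \emph{cross} term $2s_{ij}\xi_i\xi_j$ rather than the looser $s_{ij}(\xi_i^2+\xi_j^2)$ that a naive application of the numerical radius would yield. Assuming $x_i,x_j\neq 0$, I would set $u_i=x_i/\|x_i\|_A$ and $u_j=x_j/\|x_j\|_A$ and apply the definition of $\omega_\mathbb{B}$ to the $\mathbb{B}$-unit vector $\tfrac{1}{\sqrt2}(u_i,u_j)\in\mathcal{H}\oplus\mathcal{H}$ paired with the block $\begin{pmatrix}0&T_{ij}\\T_{ji}&0\end{pmatrix}$. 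A direct computation gives $\big\langle\begin{pmatrix}0&T_{ij}\\T_{ji}&0\end{pmatrix}\tfrac{1}{\sqrt2}(u_i,u_j),\tfrac{1}{\sqrt2}(u_i,u_j)\big\rangle_\mathbb{B}=\tfrac12\big(\langle T_{ij}u_j\mid u_i\rangle_A+\langle T_{ji}u_i\mid u_j\rangle_A\big)$, while the chosen vector has $\mathbb{B}$-norm $1$; hence the absolute value of the bracket is at most $2s_{ij}$. Multiplying through by $\|x_i\|_A\|x_j\|_A$ and using bilinearity yields $|\langle T_{ij}x_j\mid x_i\rangle_A+\langle T_{ji}x_i\mid x_j\rangle_A|\le 2s_{ij}\,\xi_i\xi_j$, the cases where $x_i$ or $x_j$ vanishes being trivial.

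Before assembling the pieces I would record that $S$ is a real symmetric matrix, i.e.\ $s_{ij}=s_{ji}$ for $i\neq j$; this is exactly the content of the identity \eqref{offfd} proved inside Lemma \ref{jdidddd} via conjugation by the $\mathbb{B}$-unitary flip $\mathbb{U}=\begin{pmatrix}0&I\\I&0\end{pmatrix}$. Combining the diagonal and off-diagonal estimates and using $s_{ij}=s_{ji}$ then gives $|\langle\mathbb{T}x,x\rangle_\mathbb{A}|\le\sum_i s_{ii}\xi_i^2+2\sum_{i<j}s_{ij}\xi_i\xi_j=\langle S\xi,\xi\rangle$, the scalar quadratic form of $S$ at $\xi$.

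Finally, since $\omega(S)=\sup_{\|\eta\|=1}|\langle S\eta,\eta\rangle|$ forces $|\langle S\xi,\xi\rangle|\le\omega(S)\|\xi\|^2=\omega(S)$, I conclude that $|\langle\mathbb{T}x,x\rangle_\mathbb{A}|\le\omega(S)$, and taking the supremum over all $x\in\mathbb{H}$ with $\|x\|_\mathbb{A}=1$ delivers $\omega_\mathbb{A}(\mathbb{T})\le\omega(S)$. I expect the only genuine difficulty to be the off-diagonal step: one must take the \emph{balanced and normalized} vector $\tfrac{1}{\sqrt2}(u_i,u_j)$ so that the quadratic form of the $2\times2$ block reproduces precisely the symmetric cross term $2s_{ij}\xi_i\xi_j$ needed to match $\langle S\xi,\xi\rangle$; an unbalanced or unnormalized choice produces only the weaker majorant $s_{ij}(\xi_i^2+\xi_j^2)$, which fails to reconstruct the numerical radius of $S$.
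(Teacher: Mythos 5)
Your proof is correct, but it takes a genuinely different route from the paper's. The paper argues at the level of operator seminorms: it writes $\omega_\mathbb{A}(\mathbb{T})=\sup_{\theta}\|\Re_\mathbb{A}(e^{i\theta}\mathbb{T})\|_\mathbb{A}$ via \eqref{zamnum}, bounds $\|\Re_\mathbb{A}(e^{i\theta}\mathbb{T})\|_\mathbb{A}$ by the norm of the $d\times d$ scalar matrix of entrywise $A$-seminorms (Lemma \ref{lemmajdid}), then uses monotonicity of the norm for entrywise-dominated nonnegative matrices together with Lemma \ref{jdidddd} --- which identifies $\sup_{\theta}\tfrac12\|e^{i\theta}T_{ij}+e^{-i\theta}T_{ji}^{\sharp_A}\|_A$ with $s_{ij}$ --- to majorize that scalar matrix by $S$, and finally invokes $\|S\|=\omega(S)$ for the real symmetric $S$. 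You instead estimate the quadratic form $\langle\mathbb{T}x,x\rangle_\mathbb{A}$ directly against $\langle S\xi,\xi\rangle$ with $\xi=(\|x_1\|_A,\dots,\|x_d\|_A)$; the balanced test vector $\tfrac{1}{\sqrt2}(u_i,u_j)$ is exactly the right device to extract the cross term $2s_{ij}\xi_i\xi_j$ from the definition of $\omega_\mathbb{B}$, and the degenerate cases $\|x_i\|_A=0$ are indeed harmless by Cauchy--Schwarz since the $T_{ij}$ are $A$-bounded. Your route is more elementary: it needs only the definition of the $\mathbb{B}$-numerical radius and the flip symmetry $s_{ij}=s_{ji}$ (which you correctly source from \eqref{offfd}), and it bypasses \eqref{zamnum} as well as the spectral-radius and norm-domination machinery of Lemmas \ref{mjom} and \ref{lemmajdid}. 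What the paper's route buys is uniformity --- it fits this theorem into the same $\sup_{\theta}\|\Re_\mathbb{A}(e^{i\theta}\cdot)\|_\mathbb{A}$ template used for every other result in the article and makes the comparison with the cruder bound \eqref{pint2020} transparent. One cosmetic remark: the diagonal entry in \eqref{sij} should read $\omega_A(T_{ii})$ rather than $\omega(T_{ii})$, which is what both your argument and the paper's proof actually use.
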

\begin{proof}
Let $\theta\in \mathbb{R}$ and $\mathbb{B}=\begin{pmatrix}
A&0\\
0&A
\end{pmatrix}$. By using \eqref{ir2020}, it can be seen that
\begin{align*}
\Re_\mathbb{A}(e^{i\theta}\mathbb{T})
& = \frac{1}{2}\left(e^{i\theta}\mathbb{T}+e^{-i\theta}\mathbb{T}^{\sharp_\mathbb{A}}\right)\\
 &=\begin{pmatrix}
\Re_A(e^{i\theta}T_{11}) &\tfrac{1}{2}(e^{i\theta}T_{12}+e^{-i\theta}T_{21}^{\sharp_A})  &\cdots&\tfrac{1}{2}(e^{i\theta}T_{1d}+e^{-i\theta}T_{d1}^{\sharp_A}) \\
\tfrac{1}{2}(e^{i\theta}T_{21}+e^{-i\theta}T_{12}^{\sharp_A})  &   \Re_A(e^{i\theta}T_{22}) &\cdots& \tfrac{1}{2}(e^{i\theta}T_{2d}+e^{-i\theta}T_{d2}^{\sharp_A}) \\
\vdots & \vdots & \vdots & \vdots\\
\tfrac{1}{2}(e^{i\theta}T_{d1}+e^{-i\theta}T_{1d}^{\sharp_A})   &   \tfrac{1}{2}(e^{i\theta}T_{d2}+e^{-i\theta}T_{2d}^{\sharp_A})  &\cdots& \Re_A(e^{i\theta}T_{dd})\\
\end{pmatrix}.
\end{align*}
So, by applying Lemma \eqref{lemmajdid} together with the norm monotonicity of matrices with nonnegative entries and then using Lemma \ref{jdidddd} and \eqref{zamnum} we get
\begin{align*}
&\left\|\Re_\mathbb{A}(e^{i\theta}\mathbb{T})\right\|_\mathbb{A}\\
 &\leq\left\|\begin{pmatrix}
\omega_A(T_{11}) &\omega_\mathbb{B}\left[\begin{pmatrix}
0 &T_{12}\\
T_{21}&0
\end{pmatrix}\right]  &\cdots&\omega_\mathbb{B}\left[\begin{pmatrix}
0 &T_{1d}\\
T_{d1}&0
\end{pmatrix}\right] \\
\omega_\mathbb{B}\left[\begin{pmatrix}
0 &T_{21}\\
T_{12}&0
\end{pmatrix}\right]  &   \omega_A(T_{22}) &\cdots& \omega_\mathbb{B}\left[\begin{pmatrix}
0 &T_{2d}\\
T_{d2}&0
\end{pmatrix}\right] \\
\vdots & \vdots & \vdots & \vdots\\
\omega_\mathbb{B}\left[\begin{pmatrix}
0 &T_{d1}\\
T_{1d}&0
\end{pmatrix}\right]   &   \omega_\mathbb{B}\left[\begin{pmatrix}
0 &T_{d2}\\
T_{2d}&0
\end{pmatrix}\right]  &\cdots& \omega_A(T_{dd})\\
\end{pmatrix}\right\|.
\end{align*}
So, by taking the supremum over all $\theta\in \mathbb{R}$ in the above inequality we get $\omega_\mathbb{A}(\mathbb{T})\leq \|S\|$ where $S$ is defined in \eqref{sij}. Finally, by \eqref{offfd}, we have $\omega_\mathbb{B}\left[\begin{pmatrix}
0 &T_{ij}\\
T_{ji}&0
\end{pmatrix}\right]=\omega_\mathbb{B}\left[\begin{pmatrix}
0 &T_{ji}\\
T_{ij}&0
\end{pmatrix}\right]$ for all $i,j$. Thus, $S$ is a real symmetric matrix and so $\omega(S)=\|S\|$. Therefore, we get the desired result.
\end{proof}

Next we state from \cite[p. 44]{HJ} the following useful lemma.
\begin{lemma}\label{l-10}
Let $T=(t_{ij})\in \mathbb{M}_d(\mathbb{C})$ such that $t_{ij}\geq 0$ for all $i,j=1,2,\ldots,d.$ Then
$$\omega(T)=\frac{r\left (t_{ij}+t_{ji}\right)}{2}.$$
\end{lemma}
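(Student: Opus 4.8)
The plan is to reduce the claim to the Hermitian-part characterization of the numerical radius and then invoke Perron--Frobenius monotonicity. Write $B=(t_{ij}+t_{ji})_{d\times d}$. Since all entries $t_{ij}$ are real and nonnegative, $T^{*}=T^{t}$, so $B=T+T^{*}$ is a symmetric (hence selfadjoint) matrix with nonnegative entries, and the Hermitian part of $T$ is exactly $\Re(T)=\tfrac12(T+T^{*})=\tfrac12 B$. Specializing \eqref{zamnum} to $A=I$ gives $\omega(T)=\sup_{\theta\in\mathbb{R}}\|\Re(e^{i\theta}T)\|$, which is the identity I will exploit for both inequalities.

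For the lower bound, I take $\theta=0$ in this supremum to get $\omega(T)\ge\|\Re(T)\|=\tfrac12\|B\|$. Because $B$ is selfadjoint, \eqref{aself1} (with $A=I$) yields $\|B\|=r(B)$, and hence $\omega(T)\ge\tfrac12 r(B)$.

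For the upper bound, fix $\theta\in\mathbb{R}$ and set $H_\theta=\Re(e^{i\theta}T)$, a selfadjoint matrix, so that $\|H_\theta\|=r(H_\theta)$. Its entries obey $|(H_\theta)_{ij}|=\tfrac12|e^{i\theta}t_{ij}+e^{-i\theta}t_{ji}|\le\tfrac12(t_{ij}+t_{ji})=\tfrac12 B_{ij}$, the inequality being a plain triangle inequality that uses $t_{ij},t_{ji}\ge0$. The crux is then the comparison of spectral radii: for any complex matrix $M$ one has $r(M)\le r(|M|)$, where $|M|$ denotes the entrywise modulus, while for entrywise-nonnegative matrices Perron--Frobenius theory gives monotonicity, namely $0\le N_1\le N_2\Rightarrow r(N_1)\le r(N_2)$. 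Applying both facts with $M=H_\theta$, $|M|=(|(H_\theta)_{ij}|)$ and the dominating matrix $\tfrac12 B$ gives $r(H_\theta)\le r\big(\tfrac12 B\big)=\tfrac12 r(B)$. Taking the supremum over $\theta$ and using the above characterization of $\omega(T)$ yields $\omega(T)\le\tfrac12 r(B)$.

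Combining the two bounds gives $\omega(T)=\tfrac12 r(B)=\tfrac12 r(t_{ij}+t_{ji})$, as claimed. The main obstacle is the upper bound, and specifically the two Perron--Frobenius facts $r(M)\le r(|M|)$ and the monotonicity of the spectral radius on nonnegative matrices; everything else is a direct specialization of \eqref{zamnum} and \eqref{aself1}.
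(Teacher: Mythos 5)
Your proof is correct. Note that the paper itself gives no proof of this lemma---it is quoted directly from Horn and Johnson---so there is no internal argument to compare against; your route (specializing the identity $\omega(T)=\sup_{\theta}\|\Re(e^{i\theta}T)\|$ to $A=I$, bounding the entries of $\Re(e^{i\theta}T)$ by $\tfrac12(t_{ij}+t_{ji})$ via the triangle inequality, and then invoking $r(M)\le r(|M|)$ together with the monotonicity of the spectral radius on entrywise-nonnegative matrices) is exactly the standard textbook argument, and both the lower bound at $\theta=0$ and the $\theta$-uniform upper bound check out.
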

\begin{remark}
Bhunia et al. proved recently in \cite[Theorem 4.12.]{BP} that for a $d \times d$ operator matrix $\mathbb{T}=(T_{ij})$ with $T_{ij}\in \mathcal{B}_A(\mathcal{H})$. Then,
\begin{equation}\label{pint2020}
\omega_\mathbb{A}(\mathbb{T})\leq \omega([t_{ij}])\;\;\text{ where }\;\; t_{ij}= \begin{cases}
\omega_{A}(T_{ij}), &  i=j \\
\|T_{ij}\|_A, &  i\neq j.
\end{cases}
\end{equation}
Clearly, by Lemma \ref{jdidddd} one observes that
\begin{equation*}
\omega_{\mathbb{B}}\left[\begin{pmatrix}
0&T\\
S &0
\end{pmatrix}\right]\leq\frac{\|T\|_A+\|S\|_A}{2},
\end{equation*}
for every $T,S\in \mathcal{B}_{A}(\mathcal{H})$. So, by taking into consideration Lemma \ref{l-10}, it is not difficult to verify that the inequality proved in Theorem \ref{th-2} refines the inequality \eqref{pint2020}.
\end{remark}

Our next result is stated as follows.
\begin{theorem}
Let $\mathbb{T}=(T_{ij})$ be a ${d\times d}$ operator matrix where $T_{ij}\in \mathcal{B}_{A}(\mathcal{H})$. Then,
\begin{equation*}
\omega_{\mathbb{A}}(\mathbb{T})\leq \frac{1}{2}\sum_{i=1}^d\left(\omega_A(T_{ii})+\sqrt{\omega_A^2(T_{ii})+\sum^d_{j=1,j\neq i}\|T_{ij}\|_A^2}\right).
\end{equation*}
\end{theorem}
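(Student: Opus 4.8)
The plan is to follow the same two-stage strategy as in the proof of Theorem \ref{thf1}: first establish the bound for a matrix with a single nonzero row, and then recover the general case by an $\mathbb{A}$-unitary reshuffling of the rows. For the first stage I let $\mathbb{S}$ be the matrix whose only nonzero row is the first one, $(T_{11},T_{12},\ldots,T_{1d})$, and fix $\theta\in\mathbb{R}$. Exactly as in Theorem \ref{thf1}, $\Re_A(e^{i\theta}\mathbb{S})$ is $\mathbb{A}$-self-adjoint, so by \eqref{aself1} its $\mathbb{A}$-norm equals its $\mathbb{A}$-spectral radius; and by Lemma \ref{ir2020},
\[
2\Re_A(e^{i\theta}\mathbb{S})=\begin{pmatrix}
2\Re_A(e^{i\theta}T_{11}) & e^{i\theta}T_{12} & \cdots & e^{i\theta}T_{1d}\\
e^{-i\theta}T_{12}^{\sharp_{A}} & 0 & \cdots & 0\\
\vdots & \vdots & \vdots & \vdots\\
e^{-i\theta}T_{1d}^{\sharp_{A}} & 0 & \cdots & 0
\end{pmatrix}.
\]
Instead of factoring as in Theorem \ref{thf1}, I would apply Lemma \ref{mjom} directly to this matrix, which yields $2\|\Re_A(e^{i\theta}\mathbb{S})\|_{\mathbb{A}}=r_{\mathbb{A}}\big(2\Re_A(e^{i\theta}\mathbb{S})\big)\leq r(\widehat{M})$, where $\widehat{M}$ is the $d\times d$ entrywise-nonnegative scalar matrix of the $A$-norms of the entries. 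Using $\|2\Re_A(e^{i\theta}T_{11})\|_A\leq 2\omega_A(T_{11})$ (from \eqref{zamnum}) together with $\|e^{i\theta}T_{1j}\|_A=\|e^{-i\theta}T_{1j}^{\sharp_{A}}\|_A=\|T_{1j}\|_A$, the matrix $\widehat{M}$ is dominated entrywise by the arrowhead matrix $N$ with $2\omega_A(T_{11})$ in the $(1,1)$ slot, $\|T_{1j}\|_A$ in the positions $(1,j)$ and $(j,1)$ for $j\geq 2$, and zeros elsewhere. By monotonicity of the spectral radius on entrywise-nonnegative matrices, $r(\widehat{M})\leq r(N)$.

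The heart of the argument is then the spectral radius of $N$. Solving the eigenvalue equation for this arrowhead matrix shows that its only possibly nonzero eigenvalues are the two roots of $\lambda^2-2\omega_A(T_{11})\lambda-\sum_{j=2}^d\|T_{1j}\|_A^2=0$, so that $r(N)=\omega_A(T_{11})+\sqrt{\omega_A^2(T_{11})+\sum_{j=2}^d\|T_{1j}\|_A^2}$. Combining the displayed inequalities gives, for every $\theta\in\mathbb{R}$,
\[
\|\Re_A(e^{i\theta}\mathbb{S})\|_{\mathbb{A}}\leq \tfrac12\Big(\omega_A(T_{11})+\sqrt{\omega_A^2(T_{11})+\textstyle\sum_{j=2}^d\|T_{1j}\|_A^2}\Big),
\]
and taking the supremum over $\theta$ and invoking \eqref{zamnum} yields the single-row bound.

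For the second stage I would argue exactly as in Theorem \ref{thf1}: write $\mathbb{T}$ as the sum of its $d$ single-row matrices, apply the triangle inequality for the seminorm $\omega_{\mathbb{A}}$, and for each $k$ conjugate the $k$-th single-row matrix by the $\mathbb{A}$-unitary $\mathbb{U}_k$ of Theorem \ref{thf1} to move $T_{kk}$ into the $(1,1)$ slot, so that by Lemma \ref{weak} the value of $\omega_{\mathbb{A}}$ is unchanged. Since the single-row bound depends on the off-diagonal entries only through $\sum_{j\neq k}\|T_{kj}\|_A^2$, which is invariant under the reordering induced by $\mathbb{U}_k$, applying it to each row and summing gives the claimed inequality. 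The only genuinely new ingredient over Theorem \ref{thf1} is the direct use of Lemma \ref{mjom} followed by the identification of the arrowhead spectral radius; the key refinement is that the diagonal entry is now controlled by $\omega_A(T_{11})$ rather than $\|T_{11}\|_A$, which sharpens the diagonal term. I expect the eigenvalue bookkeeping for $N$ to be the main, though essentially routine, technical step.
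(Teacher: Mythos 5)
Your proposal is correct, and its overall architecture (single-row bound, then row decomposition plus conjugation by the $\mathbb{A}$-unitaries $\mathbb{U}_k$ and Lemma \ref{weak}) coincides with the paper's. Where you diverge is in how the single-row bound is obtained. The paper does not return to $\Re_{\mathbb{A}}(e^{i\theta}\mathbb{S})$ at all: it invokes Theorem \ref{th-2} applied to $\mathbb{S}$, computes the off-diagonal entries $\omega_{\mathbb{B}}\bigl[\begin{smallmatrix}0&0\\T_{1j}&0\end{smallmatrix}\bigr]=\tfrac12\|T_{1j}\|_A$ via \eqref{at2} and Lemma \ref{maxma}(c), and then converts $\omega$ of the resulting nonnegative scalar matrix into $\tfrac12 r$ of its symmetrization using Lemma \ref{l-10}, landing on the same arrowhead matrix whose spectral radius you compute. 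Your route instead re-runs the $\Re_{\mathbb{A}}(e^{i\theta}\mathbb{S})$ machinery directly: \eqref{zamnum} and \eqref{aself1} reduce everything to $r_{\mathbb{A}}$ of an explicit operator matrix, Lemma \ref{mjom} passes to the scalar matrix of $A$-norms, the $(1,1)$ entry is dominated by $2\omega_A(T_{11})$ via \eqref{zamnum} and the off-diagonal entries by $\|T_{1j}\|_A$ (using $\|T^{\sharp_A}\|_A=\|T\|_A$), and entrywise monotonicity of the spectral radius on nonnegative matrices finishes the reduction. Both approaches are sound and end at the identical quadratic $\lambda^2-2\omega_A(T_{11})\lambda-\sum_{j\ge2}\|T_{1j}\|_A^2=0$; yours is more self-contained (it bypasses Theorem \ref{th-2} and Lemma \ref{l-10} entirely, at the cost of needing the standard Perron--Frobenius monotonicity fact, which the paper also uses elsewhere), while the paper's is shorter on the page because it recycles Theorem \ref{th-2} as a black box. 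Your closing observation that the off-diagonal contribution $\sum_{j\neq k}\|T_{kj}\|_A^2$ is invariant under the reordering induced by $\mathbb{U}_k$ is exactly the point needed to make the second stage legitimate.
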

\begin{proof}
We first prove that
\begin{equation}\label{first004}
\omega_{\mathbb{A}}\left[\begin{pmatrix}
T_{11} & T_{12} &\cdots& T_{1d}\\
0 &0 &\cdots& 0\\
\vdots & \vdots & \vdots & \vdots\\
0 & 0 &\cdots& 0\\
\end{pmatrix}\right]\leq \frac{1}{2}\left(\omega_A(T_{11})+\sqrt{\omega_A^2(T_{11}) +\sum^d_{j=2}\|T_{1j}\|_A^2 }\right).
\end{equation}
By applying Theorem \ref{th-2} we obtain
\begin{align*}
&\omega_{\mathbb{A}}\left[\begin{pmatrix}
T_{11} & T_{12} &\cdots& T_{1d}\\
0 &0 &\cdots& 0\\
\vdots & \vdots & \vdots & \vdots\\
0 & 0 &\cdots& 0\\
\end{pmatrix}\right]\\
&\leq\omega\left[\begin{pmatrix}
\omega_A(T_{11}) &\omega_\mathbb{B}\left[\begin{pmatrix}
0 &T_{12}\\
0&0
\end{pmatrix}\right]  &\cdots&\omega_\mathbb{B}\left[\begin{pmatrix}
0 &T_{1d}\\
0&0
\end{pmatrix}\right] \\
\omega_\mathbb{B}\left[\begin{pmatrix}
0 &0\\
T_{12}&0
\end{pmatrix}\right]  &   0 &\cdots& 0 \\
\vdots & \vdots & \vdots & \vdots\\
\omega_\mathbb{B}\left[\begin{pmatrix}
0 &0\\
T_{1d}&0
\end{pmatrix}\right]   &   0  &\cdots& 0\\
\end{pmatrix}\right].
\end{align*}
Moreover, since $\mathbb{B}\begin{pmatrix}
0 &0\\
T_{1j}&0
\end{pmatrix}^2=\begin{pmatrix}
0 &0\\
0&0
\end{pmatrix}$ for every $j\in\{1,\cdots,d\}$, then by applying \eqref{at2} together with Lemma \ref{maxma} (c) we have
$$\omega_\mathbb{B}\left[\begin{pmatrix}
0 &0\\
T_{1j}&0
\end{pmatrix}\right]=\frac{1}{2}\left\|\begin{pmatrix}
0 &0\\
T_{1j}&0
\end{pmatrix}\right\|_\mathbb{B}=\frac{1}{2}\|T_{1j}\|_A.$$
So, we obtain
\begin{align*}
&\omega_{\mathbb{A}}\left[\begin{pmatrix}
T_{11} & T_{12} &\cdots& T_{1d}\\
0 &0 &\cdots& 0\\
\vdots & \vdots & \vdots & \vdots\\
0 & 0 &\cdots& 0\\
\end{pmatrix}\right]\\
&\leq\omega\left[\begin{pmatrix}
\omega_A(T_{11}) &\frac{\|T_{12}\|_A}{2}  &\cdots&\frac{\|T_{1d}\|_A}{2} \\
\frac{\|T_{12}\|_A}{2} &   0 &\cdots& 0 \\
\vdots & \vdots & \vdots & \vdots\\
\frac{\|T_{1d}\|_A}{2}   &   0  &\cdots& 0\\
\end{pmatrix}\right]\\
 &=\frac{1}{2}r\left[\begin{pmatrix}
2\omega_A(T_{11}) &\|T_{12}\|_A  &\cdots&\|T_{1d}\|_A \\
\|T_{12}\|_A &   0 &\cdots& 0 \\
\vdots & \vdots & \vdots & \vdots\\
\|T_{1d}\|_A   &   0  &\cdots& 0\\
\end{pmatrix}\right]\;(\text{ by Lemma }\ref{l-10})\\
&=\frac{1}{2}\left(\omega_A(T_{11})+\sqrt{\omega_A^2(T_{11}) +\sum^d_{j=2}\|T_{1j}\|_A^2 }\right).
\end{align*}
This proves \eqref{first004}. Now, by proceeding as in proof of Theorem \ref{thf1} we get the required result.
\end{proof}

The following lemma is useful in proving our next result.
\begin{lemma}\label{lemma:2}
Let $T \in \mathcal{B}_A(\mathcal{H})$. Then
\begin{equation*}
\omega_A(T) \leq\sqrt{ \|\Re_A(T)\|_A^2+\|\Im_A(T)\|_A^2}.
\end{equation*}
\end{lemma}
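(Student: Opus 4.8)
The plan is to exploit the Cartesian-type decomposition $T=\Re_A(T)+i\,\Im_A(T)$ and to show that, when tested against the semi-inner product, the two summands behave exactly like the real and imaginary parts of the scalar $\langle Tx\mid x\rangle_A$. The cornerstone is the elementary identity
$$\langle T^{\sharp_A}x\mid x\rangle_A=\overline{\langle Tx\mid x\rangle_A},\qquad x\in\mathcal{H},$$
which follows instantly from the defining relation $\langle Tx\mid y\rangle_A=\langle x\mid T^{\sharp_A}y\rangle_A$ of the $A$-adjoint upon setting $y=x$. From this I would deduce
$$\langle \Re_A(T)x\mid x\rangle_A=\Re\big(\langle Tx\mid x\rangle_A\big)\quad\text{and}\quad \langle \Im_A(T)x\mid x\rangle_A=\Im\big(\langle Tx\mid x\rangle_A\big),$$
so that both quantities are real numbers.

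Given this, I would write, for every $x\in\mathcal{H}$,
$$|\langle Tx\mid x\rangle_A|^2=\langle \Re_A(T)x\mid x\rangle_A^2+\langle \Im_A(T)x\mid x\rangle_A^2,$$
which is just the statement that the modulus squared of a complex number is the sum of the squares of its real and imaginary parts. Then for a unit vector $\|x\|_A=1$ I would bound each term by the Cauchy--Schwarz inequality for $\langle\cdot\mid\cdot\rangle_A$ together with the definition of the seminorm in \eqref{semiiineq}:
$$|\langle \Re_A(T)x\mid x\rangle_A|\le \|\Re_A(T)x\|_A\,\|x\|_A\le \|\Re_A(T)\|_A,$$
and likewise $|\langle \Im_A(T)x\mid x\rangle_A|\le \|\Im_A(T)\|_A$. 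Combining the last two displays gives $|\langle Tx\mid x\rangle_A|^2\le \|\Re_A(T)\|_A^2+\|\Im_A(T)\|_A^2$ for every $A$-unit vector. (Alternatively, since a short computation shows $A\,\Re_A(T)=\tfrac12(AT+T^*A)$ and $A\,\Im_A(T)=\tfrac1{2i}(AT-T^*A)$ are selfadjoint, both $\Re_A(T)$ and $\Im_A(T)$ are $A$-selfadjoint, so one could invoke \eqref{aself1} to replace $\|\cdot\|_A$ by $\omega_A(\cdot)$; but the crude Cauchy--Schwarz estimate already suffices here.)

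Finally I would take the supremum over all $x$ with $\|x\|_A=1$; since the right-hand side does not depend on $x$, this yields $\omega_A(T)^2\le \|\Re_A(T)\|_A^2+\|\Im_A(T)\|_A^2$, and extracting the square root gives the asserted inequality. The only step that genuinely needs attention — the ``hard part'', such as it is — is verifying that $\langle\Re_A(T)x\mid x\rangle_A$ and $\langle\Im_A(T)x\mid x\rangle_A$ are real and coincide with the real and imaginary parts of $\langle Tx\mid x\rangle_A$; once that is in place the rest is a single Cauchy--Schwarz estimate followed by a supremum. Note that no injectivity hypothesis on $A$ is required, since the whole argument takes place within the seminorm $\|\cdot\|_A$.
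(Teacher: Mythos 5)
Your proposal is correct and follows essentially the same route as the paper: both decompose $\langle Tx\mid x\rangle_A$ into the real parts $\langle \Re_A(T)x\mid x\rangle_A$ and $\langle \Im_A(T)x\mid x\rangle_A$, apply $|z|^2=(\Re z)^2+(\Im z)^2$, bound each term by the corresponding $A$-seminorm, and take the supremum over $A$-unit vectors. The only cosmetic difference is that you bound $|\langle \Re_A(T)x\mid x\rangle_A|$ by Cauchy--Schwarz while the paper invokes \eqref{aself1} via $A$-selfadjointness; your version actually spells out the reality of these quantities, which the paper leaves implicit.
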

\begin{proof}
Let $x\in \mathcal{H}$. Since $\Re_A(T)$ and $\Im_A(T)$ are $A$-selfadjoint operators, then by taking into consideration \eqref{aself1} we see that
\begin{align*}
\big|{\langle Tx\mid x \rangle}_A\big|^2
&= \big|{\langle \Re_A(T)x\mid x \rangle}_A+i{\langle \Im_A(T)x\mid x \rangle}_A\big|^2\\
& = \big|{\langle\Re_A(T)x\mid x\rangle}_A\big|^2 + \big|{\langle \Im_A(T)x\mid x\rangle}_A\big|^2\\
&\leq \|\Re_A(T)\|_A^2+\|\Im_A(T)\|_A^2.
\end{align*}
So, by taking the supremum over all $x\in \mathcal{H}$ with $\|x\|_A=1$ we get required result.
\end{proof}

\begin{theorem} \label{theorem:upper bound oprt 2}
Let $\mathbb{T}=(T_{ij})$ be a ${d\times d}$ operator matrix where $T_{ij}\in \mathcal{B}_{A}(\mathcal{H})$. Then,
\begin{equation*}
\omega_A(\mathbb{T})\leq  \frac{1}{2}\sum^d_{i=1}\sqrt{\lambda^2_i+\mu^2_i},
\end{equation*}
where
\begin{align*}
\lambda_i &=\|\Re_A(T_{ii})\|_A+\sqrt{\|\Re_A(T_{ii})\|_A^2+\sum_{j=1,j\neq i}^d\|T_{ij}\|_A^2} \;\;\text{ and }\\
\mu_i &=\|\Im_A(T_{ii})\|_A+\sqrt{\|\Im_A(T_{ii})\|_A^2+\sum_{j=1,j\neq i}^d\|T_{ij}\|_A^2}.
\end{align*}
\end{theorem}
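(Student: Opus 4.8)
The plan is to follow the two-stage scheme of Theorem~\ref{thf1}: first establish the estimate for a single nonzero row, then recover the general case by a permutation-plus-subadditivity argument. Concretely, I would first prove that for
$$\mathbb{S}=\begin{pmatrix} T_{11} & T_{12} &\cdots& T_{1d}\\ 0 &0 &\cdots& 0\\ \vdots & \vdots & \vdots & \vdots\\ 0 & 0 &\cdots& 0\\ \end{pmatrix}$$
one has $\omega_{\mathbb{A}}(\mathbb{S})\leq \tfrac12\sqrt{\lambda_1^2+\mu_1^2}$, where $\lambda_1,\mu_1$ are the quantities in the statement for $i=1$.

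For this single-row bound I would apply Lemma~\ref{lemma:2} with $\mathbb{A}$ in the role of $A$ (legitimate since $\mathbb{S}\in\mathcal{B}_{\mathbb{A}}(\mathbb{H})$ by Lemma~\ref{ir2020}), giving
$$\omega_{\mathbb{A}}(\mathbb{S})\leq\sqrt{\|\Re_{\mathbb{A}}(\mathbb{S})\|_{\mathbb{A}}^2+\|\Im_{\mathbb{A}}(\mathbb{S})\|_{\mathbb{A}}^2}.$$
Using Lemma~\ref{ir2020} to compute $\mathbb{S}^{\sharp_{\mathbb{A}}}$, the operator $\Re_{\mathbb{A}}(\mathbb{S})$ is the arrowhead operator matrix with $(1,1)$-entry $\Re_A(T_{11})$, first-row entries $\tfrac12 T_{1j}$ and first-column entries $\tfrac12 T_{1j}^{\sharp_A}$ ($j\geq2$), all other entries zero; likewise $\Im_{\mathbb{A}}(\mathbb{S})$ has $(1,1)$-entry $\Im_A(T_{11})$ and the entries $\tfrac{1}{2i}T_{1j}$, $-\tfrac1{2i}T_{1j}^{\sharp_A}$. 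I would then invoke Lemma~\ref{lemmajdid} together with $\|T_{1j}^{\sharp_A}\|_A=\|T_{1j}\|_A$ to dominate $\|\Re_{\mathbb{A}}(\mathbb{S})\|_{\mathbb{A}}$ by the norm of the scalar symmetric nonnegative matrix whose $(1,1)$-entry is $\|\Re_A(T_{11})\|_A$ and whose remaining first row/column entries are $\tfrac12\|T_{1j}\|_A$, and similarly for $\Im_{\mathbb{A}}(\mathbb{S})$.

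The decisive computation is the norm of this scalar arrowhead matrix. Since it is real symmetric its norm equals its spectral radius, and since its range lies in the two-dimensional span of $e_1$ and $\sum_{j\geq2}\|T_{1j}\|_A\,e_j$, the problem reduces to the $2\times2$ eigenvalue equation $t^2-\|\Re_A(T_{11})\|_A\,t-\tfrac14\sum_{j\geq2}\|T_{1j}\|_A^2=0$, whose larger root is exactly $\tfrac12\lambda_1$. The same argument gives $\|\Im_{\mathbb{A}}(\mathbb{S})\|_{\mathbb{A}}\leq\tfrac12\mu_1$, and substituting yields $\omega_{\mathbb{A}}(\mathbb{S})\leq\tfrac12\sqrt{\lambda_1^2+\mu_1^2}$.

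Finally, to assemble the full inequality I would split $\mathbb{T}$ into its $d$ rows, use the subadditivity of $\omega_{\mathbb{A}}$, and conjugate the $k$-th row matrix by the $\mathbb{A}$-unitary permutation $\mathbb{U}_k$ of Theorem~\ref{thf1} so as to bring $T_{kk}$ into the $(1,1)$ slot; by Lemma~\ref{weak} this leaves $\omega_{\mathbb{A}}$ unchanged, and since $\lambda_k,\mu_k$ depend on the off-diagonal entries only through the permutation-invariant sum $\sum_{j\neq k}\|T_{kj}\|_A^2$, the single-row bound applied to each transformed row contributes $\tfrac12\sqrt{\lambda_k^2+\mu_k^2}$. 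Summing over $k$ produces the claimed estimate. I expect the main obstacle to be Step~1 — carefully evaluating $\Re_{\mathbb{A}}(\mathbb{S})$ and $\Im_{\mathbb{A}}(\mathbb{S})$ (keeping track of the $\sharp_A$'s and the $\tfrac1{2i}$ factors) and extracting the exact eigenvalue $\tfrac12\lambda_1$ from the arrowhead matrix; the assembly in Step~2 is essentially identical to that of Theorem~\ref{thf1}.
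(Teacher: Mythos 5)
Your proposal is correct and follows essentially the same route as the paper: isolate a single nonzero row, apply Lemma~\ref{lemma:2} to $\mathbb{S}$, reduce the arrowhead operator matrices $\Re_{\mathbb{A}}(\mathbb{S})$ and $\Im_{\mathbb{A}}(\mathbb{S})$ to the scalar arrowhead matrix whose largest eigenvalue is $\tfrac12\lambda_1$ (resp.\ $\tfrac12\mu_1$), and assemble via the permutation argument of Theorem~\ref{thf1}. The only (immaterial) difference is that the paper passes through $\|\Re_{\mathbb{A}}(\mathbb{S})\|_{\mathbb{A}}=r_{\mathbb{A}}(\Re_{\mathbb{A}}(\mathbb{S}))$ and Lemma~\ref{mjom}, whereas you invoke Lemma~\ref{lemmajdid} directly; both land on the same scalar matrix, for which norm and spectral radius coincide.
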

\begin{proof}
Let $\mathbb{S}=\left(\begin{array}{cccc}
   T_{11}&T_{12}&\ldots &T_{1d} \\
    0&0&\ldots &0\\
    \vdots & \vdots & &\vdots  \\
     0&0&\ldots&0
    \end{array}\right).$ It is not difficult to verify that
\begin{align*}
\left\|\Re_\mathbb{A}(\mathbb{T})\right\|_\mathbb{A}
&=r_\mathbb{A}\left[\Re_\mathbb{A}(\mathbb{T})\right] \\
&=r_\mathbb{A}\left[\left(\begin{array}{cccc}
    \Re_A(T_{11})&\frac{T_{12}}{2}&\ldots&\frac{T_{1d}}{2} \\
    \frac{T_{12}^{\sharp_A}}{2}&0&\dots &0\\
	  \vdots& \vdots& &\vdots \\
    \frac{T_{1d}^{\sharp_A}}{2}&0&\ldots&0
    \end{array}\right)\right] \\
 &\leq r\left[ \left(\begin{array}{cccc}
    \|\Re_A(T_{11})\|_A&\frac{\|T_{12}\|_A}{2}&\ldots&\frac{\|T_{1d}\|_A}{2} \\
    \frac{\|T_{12}^{\sharp_A}\|_A}{2}&0&\ldots&0\\
    \vdots& \vdots& &\vdots \\
    \frac{\|T_{1d}^{\sharp_A}\|_A}{2}&0&\ldots&0
    \end{array}\right) \right \|\;(\text{ by Lemma }\ref{mjom})\\
    &= \frac{1}{2}\left(\|\Re_A(T_{11})\|_A+\sqrt{\|\Re_A(T_{11})\|_A^2+\sum_{j=2}^d\|T_{1j}\|_A^2}\right).
\end{align*}
Now, it can be seen that
		$$\Im_\mathbb{A}(\mathbb{T})=\left(\begin{array}{cccc}
    \Im_A(T_{11})&\frac{T_{12}}{2i}&\ldots&\frac{T_{1d}}{2i} \\
    -\frac{T^{\sharp_A}_{12}}{2i}&0&\ldots&0\\
	   \vdots& \vdots& &\vdots \\
    -\frac{T^{\sharp_A}_{1d}}{2i}&0&\ldots&0
    \end{array}\right).$$
Similarly, we prove that
\begin{align*}	
\|\Im_\mathbb{A}(\mathbb{T})\|_\mathbb{A}\leq \frac{1}{2}\left(\|\Im_A(T_{11})\|_A+\sqrt{\|\Im_A(T_{11})\|_A^2+\sum_{j=2}^d\|T_{1j}\|_A^2}\right).
\end{align*}
Hence, by Lemma \ref{lemma:2}, we get
\begin{equation*}
\omega_{\mathbb{A}}\left(\mathbb{S}\right)\leq \frac{1}{2} \sqrt{\lambda^2+\mu^2},
\end{equation*}
where
\begin{align*}
\lambda&=\|\Re_A(T_{11})\|_A+\sqrt{\|\Re_A(T_{11})\|_A^2+\sum_{j=2}^d\|T_{1j}\|_A^2},\\
\mu&=\|\Im_A(T_{11})\|_A+\sqrt{\|\Im_A(T_{11})\|_A^2+\sum_{j=2}^d\|T_{1j}\|_A^2}.
\end{align*}
Finally, by using an argument similar to that used in proof of Theorem \ref{thf1} we reach the desired result.
\end{proof}

Our next result reads as follows.
\begin{theorem}
Let $\mathbb{T}=(T_{ij})$ be a ${d\times d}$ operator matrix where $T_{ij}\in \mathcal{B}_{A}(\mathcal{H})$. Then,
\begin{equation*}
\omega_{\mathbb{A}}(\mathbb{T})\leq \max_{i\in\{1,\cdots,d\}}\omega_A(T_{ii})+\frac{1}{2}\sum^d_{i=1}\sqrt{\left\|\sum^d_{j=1,j\neq i}T_{ij}T_{ij}^{\sharp_A}\right\|_A}.
\end{equation*}
\end{theorem}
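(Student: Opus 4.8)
The plan is to split $\mathbb{T}$ into its diagonal part and its off-diagonal part, dispatch the diagonal through Lemma \ref{maxma}, and exploit a nilpotency phenomenon to evaluate each off-diagonal contribution \emph{exactly} via \eqref{at2}. Concretely, write $\mathbb{T} = \mathbb{D} + \sum_{i=1}^d \mathbb{R}_i$, where $\mathbb{D}=\mathrm{diag}(T_{11},\ldots,T_{dd})$ and $\mathbb{R}_i$ is the operator matrix whose $i$-th row carries the off-diagonal entries $T_{ij}$ ($j\neq i$), with a $0$ in the $(i,i)$ slot, and all of whose other rows vanish. Since $\omega_\mathbb{A}(\cdot)$ is a seminorm on $\mathcal{B}_{\mathbb{A}^{1/2}}(\mathbb{H})$, subadditivity gives $\omega_\mathbb{A}(\mathbb{T})\leq \omega_\mathbb{A}(\mathbb{D})+\sum_{i=1}^d\omega_\mathbb{A}(\mathbb{R}_i)$, and Lemma \ref{maxma}(b) identifies the diagonal term as $\omega_\mathbb{A}(\mathbb{D})=\max_{1\leq i\leq d}\omega_A(T_{ii})$, exactly the first summand of the claimed bound.

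The heart of the argument is the exact evaluation $\omega_\mathbb{A}(\mathbb{R}_i)=\tfrac{1}{2}\sqrt{\big\|\sum_{j\neq i}T_{ij}T_{ij}^{\sharp_A}\big\|_A}$. First I would observe that $\mathbb{R}_i^2=0$: in the product $(\mathbb{R}_i)(\mathbb{R}_i)$ a nonzero $(i,k)$-entry of the left factor forces $k\neq i$, while a nonzero $(k,q)$-entry of the right factor forces $k=i$, so no term survives. In particular $\mathbb{A}\mathbb{R}_i^2=0$, and since $\mathbb{R}_i\in\mathcal{B}_{\mathbb{A}}(\mathbb{H})\subseteq\mathcal{B}_{\mathbb{A}^{1/2}}(\mathbb{H})$ by Lemma \ref{ir2020}, the equality \eqref{at2} (applied at the block level to $\mathbb{A}$) yields $\omega_\mathbb{A}(\mathbb{R}_i)=\tfrac{1}{2}\|\mathbb{R}_i\|_\mathbb{A}$. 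It then remains to compute the $\mathbb{A}$-norm. Using \eqref{diez} I would write $\|\mathbb{R}_i\|_\mathbb{A}^2=\|\mathbb{R}_i\mathbb{R}_i^{\sharp_\mathbb{A}}\|_\mathbb{A}$; Lemma \ref{ir2020} shows that $\mathbb{R}_i^{\sharp_\mathbb{A}}$ has nonzero entries only in its $i$-th column (namely $T_{ij}^{\sharp_A}$ in position $(j,i)$ for $j\neq i$), so a direct block multiplication collapses $\mathbb{R}_i\mathbb{R}_i^{\sharp_\mathbb{A}}$ to the matrix whose only nonzero entry is $\sum_{j\neq i}T_{ij}T_{ij}^{\sharp_A}$ in the $(i,i)$ position. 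Applying Lemma \ref{maxma}(a) to this single-entry diagonal matrix gives $\|\mathbb{R}_i\mathbb{R}_i^{\sharp_\mathbb{A}}\|_\mathbb{A}=\big\|\sum_{j\neq i}T_{ij}T_{ij}^{\sharp_A}\big\|_A$, hence $\|\mathbb{R}_i\|_\mathbb{A}=\sqrt{\big\|\sum_{j\neq i}T_{ij}T_{ij}^{\sharp_A}\big\|_A}$, and the desired value of $\omega_\mathbb{A}(\mathbb{R}_i)$ follows.

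Assembling the pieces gives $\omega_\mathbb{A}(\mathbb{T})\leq \max_i\omega_A(T_{ii})+\tfrac{1}{2}\sum_{i=1}^d\sqrt{\big\|\sum_{j\neq i}T_{ij}T_{ij}^{\sharp_A}\big\|_A}$, which is the assertion. I do not expect a genuine obstacle here: unlike Theorem \ref{thf1}, no reduction to a spectral radius nor any $\mathbb{A}$-unitary conjugation (Lemma \ref{weak}) is needed, because the nilpotency $\mathbb{R}_i^2=0$ makes \eqref{at2} applicable to each off-diagonal row matrix \emph{directly} and converts the numerical-radius estimate into a clean norm computation. The only points demanding care are the bookkeeping in the two block multiplications---verifying that $\mathbb{R}_i^2=0$ and that $\mathbb{R}_i\mathbb{R}_i^{\sharp_\mathbb{A}}$ concentrates in the $(i,i)$ entry---and the observation that $\mathbb{R}_i\in\mathcal{B}_\mathbb{A}(\mathbb{H})$, which legitimizes the use of \eqref{diez} and \eqref{at2}.
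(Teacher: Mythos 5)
Your proposal is correct and follows essentially the same route as the paper's own proof: the same splitting of $\mathbb{T}$ into the diagonal part (handled by Lemma \ref{maxma}(b)) plus the row matrices $\mathbb{R}_i$, the same use of $\mathbb{A}\mathbb{R}_i^2=0$ with \eqref{at2} to get $\omega_{\mathbb{A}}(\mathbb{R}_i)=\tfrac{1}{2}\|\mathbb{R}_i\|_{\mathbb{A}}$, and the same computation of $\|\mathbb{R}_i\|_{\mathbb{A}}$ via \eqref{diez} and Lemma \ref{maxma}. Your block-multiplication checks ($\mathbb{R}_i^2=0$ and the collapse of $\mathbb{R}_i\mathbb{R}_i^{\sharp_{\mathbb{A}}}$ to the $(i,i)$ entry) match the paper's calculations exactly.
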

\begin{proof}
By using the triangle inequality and Lemma \ref{maxma} (b) we get
\begin{align*}
\omega_{\mathbb{A}}(\mathbb{T})
& \leq\max_{i\in\{1,\cdots,d\}}\omega_A(T_{ii})+\omega_{\mathbb{A}}\left[\begin{pmatrix}
0 &T_{12} &\cdots& T_{1d}\\
0 & 0 &\cdots& 0\\
0 &0 &\cdots& 0\\
\vdots & \vdots & \vdots & \vdots\\
0 & 0 &\cdots& 0\\
\end{pmatrix}\right]
\\
&+\omega_{\mathbb{A}}\left[\begin{pmatrix}
0 &0 &\cdots& 0& 0\\
T_{21} & 0 &T_{23}&\cdots& T_{2d}\\
0 &0 &\cdots& 0& 0\\
\vdots & \vdots & \vdots & \vdots& \vdots\\
0 & 0 &\cdots& 0& 0\\
\end{pmatrix}\right]+\ldots+\omega_{\mathbb{A}}\left[\begin{pmatrix}
0 &0 &\cdots& 0& 0\\
\vdots & \vdots & \vdots & \vdots& \vdots\\
0 & 0 &\cdots& 0& 0\\
T_{d1} & T_{d2} &\cdots& T_{dd-1}& 0\\
\end{pmatrix}\right].
\end{align*}
On the other hand it can be seen that
\begin{align*}
&\mathbb{A}\begin{pmatrix}
0 &T_{12} &\cdots& T_{1d}\\
0 & 0 &\cdots& 0\\
0 &0 &\cdots& 0\\
\vdots & \vdots & \vdots & \vdots\\
0 & 0 &\cdots& 0\\
\end{pmatrix}^2
=\mathbb{A}\begin{pmatrix}
0 &0 &\cdots& 0& 0\\
T_{21} & 0 &T_{23}&\cdots& 0\\
0 &0 &\cdots& 0\\
\vdots & \vdots & \vdots & \vdots& \vdots\\
0 & 0 &\cdots& 0& 0\\
\end{pmatrix}^2 \\
 &=\ldots=\mathbb{A}\begin{pmatrix}
0 &0 &\cdots& 0\\
\vdots & \vdots & \vdots & \vdots\\
0 & 0 &\cdots& 0\\
T_{d1} & T_{d2} &\cdots& T_{dd}\\
\end{pmatrix}^2=\begin{pmatrix}
0 &0 &\cdots& 0\\
\vdots & \vdots & \vdots & \vdots\\
0 & 0 &\cdots& 0\\
0 & 0 &\cdots& 0\\
\end{pmatrix}.
\end{align*}
So, by \eqref{at2} we infer that
$$\mathbb{A}\begin{pmatrix}
0 &T_{12} &\cdots& T_{1d}\\
0 & 0 &\cdots& 0\\
0 &0 &\cdots& 0\\
\vdots & \vdots & \vdots & \vdots\\
0 & 0 &\cdots& 0\\
\end{pmatrix}=\frac{1}{2}\left\|\begin{pmatrix}
0 &T_{12} &\cdots& T_{1d}\\
0 & 0 &\cdots& 0\\
0 &0 &\cdots& 0\\
\vdots & \vdots & \vdots & \vdots\\
0 & 0 &\cdots& 0\\
\end{pmatrix}\right\|_\mathbb{A}.$$
Moreover, by using \eqref{diez} and Lemma \ref{maxma}, it can be checked that
\begin{align*}
\left\|\begin{pmatrix}
0 &T_{12} &\cdots& T_{1d}\\
0 & 0 &\cdots& 0\\
0 &0 &\cdots& 0\\
\vdots & \vdots & \vdots & \vdots\\
0 & 0 &\cdots& 0\\
\end{pmatrix}\right\|_\mathbb{A}^2
&=\left\|\begin{pmatrix}
0 &T_{12} &\cdots& T_{1d}\\
0 & 0 &\cdots& 0\\
0 &0 &\cdots& 0\\
\vdots & \vdots & \vdots & \vdots\\
0 & 0 &\cdots& 0\\
\end{pmatrix}\begin{pmatrix}
0 &T_{12} &\cdots& T_{1d}\\
0 & 0 &\cdots& 0\\
0 &0 &\cdots& 0\\
\vdots & \vdots & \vdots & \vdots\\
0 & 0 &\cdots& 0\\
\end{pmatrix}^{\sharp_{\mathbb{A}}}\right\|_\mathbb{A}\\
& =\left\|\begin{pmatrix}
\sum^d_{k=2}T_{1k}T_{1k}^{\sharp_A} &0 &\cdots& 0\\
0 & 0 &\cdots& 0\\
0 &0 &\cdots& 0\\
\vdots & \vdots & \vdots & \vdots\\
0 & 0 &\cdots& 0\\
\end{pmatrix}\right\|_\mathbb{A}=\left\|\sum^d_{k=2}T_{1k}T_{1k}^{\sharp_A}\right\|_A.
\end{align*}
Hence, by using similar arguments we get
\begin{align*}
\omega_{\mathbb{A}}(\mathbb{T})
& \leq\max_{i\in\{1,\cdots,d\}}\omega_A(T_{ii})+
\frac{1}{2}\sqrt{\left\|\sum^d_{k=2}T_{1k}T_{1k}^{\sharp_A}\right\|_A}
\\
&+\frac{1}{2}\sqrt{\left\|\sum^d_{k=1,k\neq2}T_{2k}T_{2k}^{\sharp_A}\right\|_A}+\ldots+
\frac{1}{2}\sqrt{\left\|\sum^d_{k=1,k\neq d}T_{dk}T_{dk}^{\sharp_A}\right\|_A}.
\end{align*}
This achieves the proof of the theorem.
\end{proof}

\end{document}